\documentclass{amsart}
\usepackage{tikz}
\usepackage{color,soul,amssymb}



\title[Symmetry groups of Boolean functions: simple groups]{Symmetry groups of Boolean functions: \\ simple groups
}

\author{Mariusz Grech, Andrzej Kisielewicz}
\address{Institute of Mathematics, University of Wroclaw \\
pl.Grunwaldzki 2, 50-384 Wroclaw, Poland}
\email{[Mariusz.Grech,Andrzej.Kisielewicz]@math.uni.wroc.pl}
\thanks{Supported in part by Polish NCN grant 2016/21/B/ST1/03079}

\keywords{Symmetry groups of Boolean functions, automorphism group of hypergraphs, relation groups, permutation groups, simple groups, regular set, distinguishing number.}

\begin{document}

\newtheorem{Theorem}{Theorem}[section]
\newtheorem{Lemma}[Theorem]{Lemma}
\newtheorem{Fact}[Theorem]{Fact}
\newtheorem{Proposition}[Theorem]{Proposition}
\newtheorem{Corollary}[Theorem]{Corollary}

\newcommand{\G}{\mathcal G}
\newcommand{\f}{F}

\renewcommand{\to}{\rightarrow}
\renewcommand{\gg}{g}
\newcommand{\hh}{h}
\newcommand{\uu}{u}
\newcommand{\ww}{w}
\newcommand{\kk}{k}
\newcommand{\vv}{v}
\newcommand{\bs}{\backslash}

\renewcommand{\iff}{\longleftrightarrow}


\begin{abstract}
We consider the problem of characterizing the class of those permutation groups that are the symmetry groups of Boolean functions. These are exactly the \emph{automorphism groups of hypergraphs}. They are also called the \emph{relation groups}. In this paper we describe those of them that are simple as abstract groups. This is done by combining results based on the classification of finite simple groups with the description of intransitive actions of simple groups.
We also obtain a complete characterization of those simple permutation groups that have regular sets, and prove that (with one exception) if a simple permutation group $G$ is a relation group, then every subgroup of $G$ is a relation group.
\end{abstract}

\maketitle

\section{Introduction}

It is a well known result by Frucht that every group is isomorphic with the automorphism group of some graph, which implies easily that every group is isomorphic with the symmetry group of some Boolean function. However, this fact is far away from knowledge that is needed in many applications of the automorphism groups of various combinatorial structures. Often knowledge on orbits of these groups, dependencies between actions on different orbits, and generally, the combinatorial structure of the corresponding permutation groups is needed (see, for example, problems in computer science and universal algebra mentioned in \cite{CK,kis1}). Also, if one wishes to classify combinatorial structures \emph{via} their automorphism groups, then using abstract groups is a very rough classification that ignores the fact that only highly symmetric combinatorial structures enjoy the property of transitivity of elements. For example, most graphs are not vertex transitive and the action of the automorphism group on different orbits may be very different and essential with regard to various graph properties.

Meanwhile, most research in permutation group theory is focused on transitive groups, most notably in connection with the task of classifying the finite simple groups. Knowledge on intransitive actions is incidental and scattered. This is our general motivation to study permutation groups on their own, and in particular, to study the actions of imprimitive and intransitive permutation groups.

One of the natural questions in this direction is which permutation groups can be represented as the symmetry groups of Boolean functions. This question is strictly connected with applications in computer science concerning parallel complexity of languages (see \cite{CK}). Yet, describing the symmetry groups of Boolean function may serve also as a general tool for classifying permutation groups with regard to their combinatorial complexity.

The \emph{symmetry group} of a Boolean function $\f:\{0,1\}^n \to \{0,1\}$ is the set of all permutations $\gg$ such that
$$
\f(x_{1\gg}, x_{2\gg},\ldots,x_{x\gg}) = \f(x_1,x_2,\ldots,x_n)
$$
for all $x_1,x_2,\ldots,x_n \in \{0,1\}$.
The class of all permutation groups that are the symmetry groups of Boolean function is denoted $BGR(2)$. The analogous notation $BGR(k)$ stands for the class of all permutation groups that are the symmetry groups of $k$-valued Boolean function, i.e., those that have the arguments in the set $\{0,1\}$, but take values in the set $\{0,1,\ldots, k-1\}$. Finally, $BGR$ is the union of all $BGR(k)$ for any $k\geq 2$.

Considering $n$-tuples $(x_1,x_2,\ldots,x_n)$ as the indicator functions of subsets of the set of indices, each Boolean function $\f$ on $n$ variables may be considered as a function $\f(x)$ from the family of the subsets $x\subseteq \{1,2,\ldots,n\}$ to $\{0,1\}$, and this is the way we use in this paper. This yields the correspondence of the Boolean functions with hypergraphs (defined as families of subsets), and shows that the class $BGR(2)$ may be viewed as the class of the \emph{automorphism groups of
hypergraphs}. A family of subsets may be also viewed as an unordered relation, and the symmetry group of the corresponding Boolean function as the invariance group of such a relation (see \cite{SV}). The classes $BGR(k)$ correspond to the automorphism groups of colored hypergraphs or relational structures. 
In particular, $BGR$ is exactly the class of permutation groups that in terms of \cite{SV} are \emph{orbit closed} (see also \cite{ing,SY,SW} for other related work on this kind of orbit equivalence).

Although every abstract group is isomorphic with the symmetry group of a Boolean function, not every permutation group occurs as such. For example the alternating group $A_n$, in its natural action on the $n$-element set, is not the symmetry group of any Boolean function. Yet, there are Boolean function whose symmetry groups are abstractly isomorphic with $A_n$, but not permutation isomorphic. In this paper, in particular, all such permutation groups are described. 

In \cite{CK}, as a key result, a theorem was stated that actually $BGR=BGR(2)$, i.e. that the symmetry groups of Boolean functions are exactly the orbit closed permutation groups.
In \cite{kis1}, this result has been shown to be false. It was observed that the Klein four group $K_4$ in its natural action belongs to $BGR(3)$, but not to $BGR(2)$. So far, no other counterexample like this has been found, and no result showing that this is an exceptional example has been proved. We observe, that the claim in \cite[Corollary~5.3]{SV} that there are infinitely many wreath products providing such counterexamples also turned out to be false (the proof provides, in fact, no single example like this; more details on that are given in \cite{GKwr}). So, the intriguing problem whether there are only a few exceptions to the claim that $BGR=BGR(2)$ or whether the classes $BGR(k)$ contain many unknown permutation groups, remains open.

In \cite{GK1}, we have proved that every abelian permutation group, but known exceptions, is the symmetry group of a Boolean function. In fact the result included a larger class of permutation groups, namely, all subgroups of direct sums of regular permutation groups.
In this paper we consider this problem for the other extreme: the class of those permutation groups that are simple as abstract groups. The fact that these groups have no nontrivial normal subgroups, on the one hand, and the classification of finite simple groups, on the other hand, make it feasible to obtain a complete description for this class.

It follows from the results of \cite{SV} that the only primitive
simple groups, other than $A_n$, that are not the symmetry groups of Boolean functions are $PSL(2,8)$ and $C_5$ in their natural actions.
Yet, simple groups have also many imprimitive and intransitive actions, and we wish to establish which of the corresponding imprimitive and intransitive permutation groups are the symmetry groups of Boolean functions, and which are not. In particular, our first motivation was to find other, more complex counterexamples to the claim that $BGR=BGR(2)$. We prove, however, that in the class of simple groups the claim turns out to be true.

The paper is organized as follows. We start with recalling the basic notions in Section~\ref{s:pre}. In the next section we deal with transitive imprimitive simple groups. We show that all such groups occur as the symmetry groups of Boolean functions.
Intransitive actions of simple groups are described in Proposition~\ref{p:simple}. This is relatively easy to do using the description of the general construction of intransitive permutation groups contained in \cite{kis1}. This construction and some basic results are recalled for the reader convenience in Section~\ref{s:ss}. Next, using the classification of the simple groups and results from \cite{BP,kis1,ser,SV} we prove a number of lemmas on intransitive simple groups,
leading to our main result, which is given in Theorem~\ref{th:main}. It provides a complete description of all simple permutations groups that are the symmetry groups of Boolean functions. All the remaining simple permutation groups, which includes many intransitive groups abstractly isomorphic to alternating groups, are proved not to belong to $BGR$, at all. Our research yields also additional results characterizing simple permutation groups that have a regular set (Theorem~\ref{th:reg}), and showing that (with one exception) the property that a simple permutation groups is the symmetry group of a Boolean function is inherited by subgroups (Theorem~\ref{th:sub}).

Throughout, many particular cases arise that requires some computer calculations. We have used system GAP 4.10.1. In cases of applying GAP, we try to give only those computational details, that seem sufficient for the reader to verify our results, using any computational system for permutation groups.

\section{Prerequisites}\label{s:pre}

A \emph{permutation group} $G$ is a subgroup of the symmetric group $Sym(\Omega)$ on a set $\Omega$.
In this paper, all permutation groups are finite and considered up to \emph{permutation isomorphism} \cite[p.~17]{DM} (i.e., two groups that differ only in labelling of points are treated as the same).
A permutation group $G$ is called \emph{trivial}, if the only permutation in $G$ is the identity. 
The symmetric group on $n$ elements is denoted $S_n = Sym(\{1,2,\ldots,\})$, and the alternating group on $n$ elements is denoted $A_n$. The cyclic group generated by the permutation $(1,2,\ldots,n)$ is denoted $C_n$. We use standard notation for abstract groups (used in \cite{DM}) as the notation for permutation groups in their natural actions. The projective special linear groups are denoted $PSL(d,q)$ or $L_d(q)$ (depending on the source we refer to). We recall that in their natural actions they are permutation groups on $n= (q^d-1)/(q-1)$ elements.

Following \cite{cam}, we write permutations on the right, and compose from left to right (as in the notation $\alpha(\gg\hh) = (\alpha\gg)\hh $). The same notation is applied to the action of permutations on subsets or pairs of elements of the underlying set $\Omega$.
Yet, the orbit of a set $x \subseteq \Omega$ in the action of $G \leq Sym(\Omega)$ on the subsets of $\Omega$ is denoted $x^G$.

We view Boolean functions $\f(x)$ as the functions from the set $P(\Omega)$ of subsets of a set $\Omega$ to the set $\{0,1\}$. 
The family of sets $x$ with $\f(x)=1$ may be considered as an \emph{unordered relation} $R$ on $\Omega$, and the \emph{symmetry group} of $\f$ is the set of permutations of $\Omega$ preserving $R$ (cf. \cite{SV}). We will use the notation $G=\G(R)$ for this group, and refer to $R$ as a \emph{defining relation} for $G$.

More generally, for a $k$-valued Boolean function $\f$, the family ${\mathcal R} = (R_0,\ldots,R_{k-1})$ of relations $R_i$ defined by $\f(x)=i$ may be considered as a relational structure $P(\Omega, \mathcal R)$ (of unordered relations), and the symmetry group of $\f$
is the set of permutations of $\Omega$ preserving all the relations in ${\mathcal R}$.
Yet, in this case it is more convenient to speak of the symmetry group $\G(\f)$ of the function $\f: P(\Omega) \to \{0,\dots,k-1\}$. 

We should note that the groups defined by (ordered) relations have been introduced by Wielandt in \cite{wie}, and it must be emphasized that we are concerned merely with unordered relations and actions of permutation groups on (unordered) sets (as in \cite{kan,LW}).

The action of a group $G \leq Sym(\Omega)$
on the power set $P(\Omega)$ preserves the partition into subsets of different cardinalities, so it has at least $|\Omega|+1$ orbits. If $G=\G(R)$, then obviously $R$ must be a union of orbits of $G$ in its action on $P(\Omega)$, and this observation is used in looking for defining relations for a permutation group. Note that whether $\Omega \in R$ or not does not matter for the definition, so in a case of more involved constructions we may make an arbitrary assumption here. Similarly, if $G$ is transitive, then either all singletons belong to $R$ or no singleton belongs to $R$, and again we may assume the former case or the latter, depending on the need. The same concerns the complements of singletons in $\Omega$, because the action of $G$ on a set $x$ and $\Omega\setminus x$ is essentially the same. Note also that a permutation $u$ on $\Omega$ preserves $R$ if and only if it preserves the sets of cardinality $k$ in $R$, for each $1\leq k < \Omega$. The arity of the defining relation $R$ is the set of cardinalities $ar(R) =\{|x| : x\in R\}$.
In this connection, recall that a permutation group $G \leq Sym(\Omega)$ is $k$-\emph{homogeneous} if its action on $k$-element subsets of $\Omega$ is transitive. It is called \emph{set-transitive} if it is $k$-{homogeneous} for every $k\leq |\Omega|$. In this paper we will make use of the description of the set-transitive permutation groups provided in \cite{BP}.

Given a permutation group $G \leq Sym(\Omega)$, a subset $x\subseteq \Omega$ is called \emph{regular}, if the (setwise) stabilizer of the set $x$ in $G$ is trivial. In such a case $G$ acts regularly on the orbit $x^G$ in the power set $P(\Omega)$.





The notion of regular set is closely connected with that of distinguishing number. Recall, that the \emph{distinguishing number}
of a permutation group $G \leq Sym(\Omega)$, denoted $D(G)$, is the least positive integer $k$ such that there exists a partition of $\Omega$ into $k$ parts with the property that only the identity of $G$ stabilizes each part. In particular, $D(G)=2$ if and only if $G$ has a regular set (\cite{DMH}).

For other related papers on regular sets and the distinguishing number see \cite{BC,CNS,glu,SY,SW,tym}.


\section{Transitive imprimitive simple groups}\label{s:ti}

Let $G$ be a permutation group that is simple as an abstract group. As we have mentioned in the introduction, if $G$ is in addition primitive, then by \cite[Theorem~4.2]{SV}, except for alternating groups $A_n$, and two additional groups, $C_5$ and $PSL(2,8)$, each simple primitive group is a relation group (belongs to $BGR(2)$). Thus, in this section, as a first step, we focus on imprimitive transitive groups. We prove the following

\begin{Proposition} \label{p:ti} Each simple transitive imprimitive permutation group $G$ has a regular set, and all subgroups of $G$ belong to $BGR(2)$.
\end{Proposition}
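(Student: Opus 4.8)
The plan is to exploit that $G$ is \emph{simple}. Since $G$ is transitive and imprimitive, fix a nontrivial block system $\mathcal B=\{B_1,\dots,B_m\}$; passing if necessary to a maximal one, I may assume $G$ acts primitively on $\mathcal B$. The induced action is a nontrivial homomorphism of $G$ into $Sym(\mathcal B)$, and its kernel is a proper normal subgroup of $G$, hence trivial. Thus $G$ acts \emph{faithfully} on the $m\geq 2$ blocks, and — crucially — every non-identity $\gg\in G$ moves at least two blocks. Writing $b\geq 2$ for the block size and $n=mb$, this controls the cycle structure of each $\gg\neq 1$.

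To produce a regular set I would count, for each $\gg\neq 1$, the subsets of $\Omega$ fixed by $\gg$; this number equals $2^{c(\gg)}$, where $c(\gg)$ is the number of cycles of $\gg$ on $\Omega$. Since $\gg$ moves at least two blocks and permutes the $b$ points of each moved block in cycles whose length is a multiple of the corresponding block-cycle length, one gets $c(\gg)\leq n-b\leq n-2$, and the sharper bound $c(\gg)\leq n-kb/2$ when $\gg$ moves $k$ blocks. Summing $2^{c(\gg)}$ over $\gg\neq 1$ and comparing with the total $2^n$ of all subsets yields a regular set whenever elements of small block-support are sufficiently rare, which — by the large minimal degree of a simple group acting primitively on its blocks — holds once $n$ is large. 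I expect the genuine difficulty to lie in the thin-and-long block systems (small $b$, large $m$), most notably the imprimitive actions abstractly isomorphic to $A_m$, where three-cycles of blocks are too numerous for the naive count to close; these finitely many residual families I would settle either by directly constructing a size-distinguishing set (choosing the values $|x\cap B_i|$ so as to separate the blocks) or by explicit verification in GAP, exactly the case analysis announced in the introduction.

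For the second assertion I would first observe that a regular set $x$ for $G$ is automatically a regular set for every subgroup $H\leq G$, since the $H$-stabilizer of $x$ lies inside the trivial $G$-stabilizer. It is tempting to deduce membership in $BGR(2)$ from the regular orbit $x^H$ alone, but this is false in general — the primitive group $C_5$ has the regular set $\{1,2\}$ yet does not lie in $BGR(2)$ — so the argument must use imprimitivity. My plan is therefore to build a defining relation for $H$ as a union of orbits combining the regular orbit $x^H$ with orbits that encode the block system $\mathcal B$ (for instance an orbit of block-sized sets, or of sets meeting each block in a prescribed pattern), so that any symmetry of the relation is forced both to preserve $\mathcal B$ and to fix $x^H$ setwise.

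The main obstacle in both halves is the same: controlling \emph{extra} symmetries. In part one it is the breakdown of the fixed-subset count for thin block systems, which I resolve by separating off the alternating-type actions. In part two it is proving that the symmetry group of the constructed relation is \emph{exactly} $H$ rather than some overgroup; here the faithful, imprimitive action is what rescues the argument, since preserving the block relation together with the regular orbit leaves no room for further permutations. The remaining small degrees I would check directly.
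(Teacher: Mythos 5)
Your opening reduction is exactly the paper's: choosing a maximal block system corresponds to picking a maximal $H$ with $N<H<G$ and using the primitive quotient $(G,G/H)$, with faithfulness for free by simplicity. But the regular-set half then diverges, and your counting argument fails in the opposite place from where you expect. For block actions abstractly isomorphic to $A_m$ the block size $b=[H:N]$ is automatically at least $m-1$ (the point stabilizer $A_{m-1}$ has no proper subgroup of index below $m-1$), so three-cycles of blocks move at least $3(m-1)$ points and the sum $\sum_{g\neq 1}2^{c(g)}$ closes comfortably; this large-block phenomenon is precisely what the paper exploits in Lemma~\ref{l:NHG:n-1}, which builds a regular set \emph{constructively}, by an injective labelling $|y\cap s_i|$ of the blocks, rather than by counting. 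The genuinely hard cases are instead the fifteen primitive groups in the list $\mathcal L$ whose block action has \emph{no} regular set, several with tiny blocks ($b=2$ over $PSL(3,2)$ or $M_{11}$, giving the degree-$14$ and degree-$22$ groups), where your estimate diverges: there the paper needs the distinguishing number $D(G,G/H)\leq 4$ from \cite{DMH} together with Lemma~\ref{l:NHG:d}, plus explicit GAP verification for 15T5, 14T10 and 22T22. Your promise to "settle finitely many residual families" presupposes knowing that the residual set is finite and what it is, which is classification-strength information you have not supplied; the paper gets it by citing Seress's list rather than by a self-contained count.

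The second half has the larger gap. Your relation --- regular orbit of $x$ together with block-encoding orbits --- is the right flavour, but you never prove the symmetry group is \emph{exactly} $H$, and two obstacles are fatal as sketched. First, preserving block-shaped orbits only confines a symmetry to the block-preserving wreath product $Sym(b)\wr Sym(m)$, a huge overgroup; to force the induced block permutation into the quotient group one must lift a \emph{defining} relation of the primitive block action, and this is impossible when that action is $PSL(2,8)$, which is set-transitive and hence not a relation group at all --- the paper handles this case by a bespoke construction through the two-point stabilizer $C_7$. Second, adjoining the regular orbit $x^H$ to kill the kernel direction creates new symmetries unless the cardinality of $x$ avoids the arity of the remaining relation (or its $G$-orbit is disjoint from it); this bookkeeping is isolated in the paper as Lemma~\ref{l:sies} and is rechecked in every case (complements of regular sets, the $[H:N]=2$ adjustment, reordering the sequence $c_1,\dots,c_d$), and it is also exactly what makes the conclusion pass from $G$ to \emph{all} subgroups of $G$, which your sketch asserts but does not derive. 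Your correct observation that a regular set alone does not yield $BGR(2)$ (the $C_5$ example) shows you see the issue, but the mechanism that resolves it --- the decoding property of the labelled set $y$, used via the conjugation trick $v=gwg^{-1}$ to show that a block-trivial symmetry preserving $y^G$ is the identity --- is missing from the proposal.
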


As it is well known, for transitive permutation groups $G$ (considered up to permutation isomorphism), it is enough to study faithful actions of $G$ on its cosets. In this paper by a coset we mean always a \emph{right} coset. By $G/H$ we denote the set of all (right) cosets of a subgroup $H$ in $G$.
The permutation group given by such an action is denoted $(G,G/H)$.
Note that, if $G$ is simple, then each action of $G$ on $G/H$ is faithful and give rise to a transitive permutation group. It is primitive if and only if $H$ is maximal.

Now, given an abstract simple group $G$, let $N<G$. Suppose that $(G,G/N)$ is not primitive. Then there exists a maximal subgroup $H$ of $G$ with $N<H<G$. Thus, the group $(G,G/H)$ is primitive. Elements $\gg \in G$ act faithfully both on $G/N$ and $G/H$, and the first action respects $H$-cosets and determines the second action.
We will say that a subset $x \in P(G/N)$ is compatible with the cosets of $H$, or $H$-compatible, if, for any $\gg\in G$, the condition $x \cap H\gg \neq \varnothing$ implies $x \supseteq H\gg$. For such an $x$, by $\bar{x}$ we denote the corresponding set of cosets of $H$.

We start from a technical lemma, which allows to transfer our results into subgroups of a given group.
It a strengthening of \cite[Lemma~3.1]{SV}.

\begin{Lemma} \label{l:sies}
Let $H\leq S_n$ be a permutation group and $y$ a regular set in $H$. Suppose that $H = \G(R)$ and let $R'$ be the relation obtained from $R$ by deleting the sets of cardinality $|y|$. Let $G = \G(R')$.
If $y^{G} \cap R = \varnothing$, then
every subgroup of $H$ belongs to $BGR(2)$. In particular, this is so, if $|y|\notin ar(R)$.
\end{Lemma}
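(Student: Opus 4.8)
The plan is to produce, for every subgroup $K\le H$, an explicit defining relation $R_K$ with $\G(R_K)=K$, thereby placing $K$ in $BGR(2)$. The whole construction is driven by the regular set $y$: since the setwise stabilizer of $y$ in $H$ is trivial, $H$ acts regularly on the orbit $y^{H}$, so the map $\hh\mapsto y^{\hh}$ is a bijection from $H$ onto $y^{H}$ intertwining right multiplication on $H$ with the $H$-action on $y^{H}$. Under this identification a subgroup $K$ corresponds to the set $S=\{y^{\kk}:\kk\in K\}$, and because $\hh\mapsto y^{\hh}$ is injective one checks at once that the setwise stabilizer of $S$ in $H$ equals $\{\hh\in H: K\hh=K\}=K$. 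This is the combinatorial heart of the argument: the regular set turns an abstract subgroup into a distinguished family of $|y|$-element sets whose $H$-stabilizer is exactly that subgroup.

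I would then take $R_K=R\cup S$ and claim $\G(R_K)=K$. Here I first note $R'\subseteq R$, so $H=\G(R)\le\G(R')=G$, and that $S\subseteq y^{H}\subseteq y^{G}$ is disjoint from $R$ by the hypothesis $y^{G}\cap R=\varnothing$; hence at cardinality $|y|$ the relation $R_K$ is the disjoint union $R_{|y|}\sqcup S$, where $R_{|y|}$ denotes the $|y|$-element sets of $R$. The inclusion $K\le\G(R_K)$ is immediate, since $K$ preserves $R$ and stabilizes $S$.

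For the reverse inclusion I take $\uu\in\G(R_K)$ and argue in three steps. First, $\uu$ preserves every cardinality class of $R_K$, and outside cardinality $|y|$ these classes coincide with those of $R'$; thus $\uu$ preserves $R'$ and $\uu\in G$. Second, since $\uu\in G$ the orbit $y^{G}$ is $\uu$-invariant, so for each $s\in S\subseteq y^{G}$ the image $s^{\uu}$ again lies in $y^{G}$ and therefore, by $y^{G}\cap R=\varnothing$, cannot lie in $R_{|y|}$; as $s^{\uu}$ must lie in $R_{|y|}\sqcup S$, it lies in $S$, and finiteness gives $S^{\uu}=S$. Third, $\uu$ now preserves $S$, hence also its complement $R_{|y|}$ within the $|y|$-class, so $\uu$ preserves all of $R$ and $\uu\in H$; combined with $S^{\uu}=S$ this yields $\uu\in \mathrm{Stab}_H(S)=K$. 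This establishes $\G(R_K)=K$ for every $K\le H$, so all subgroups of $H$ lie in $BGR(2)$. The final ``in particular'' claim is then immediate: if $|y|\notin ar(R)$ then $R$ has no $|y|$-element sets, whence $R'=R$, $G=H$, and $y^{G}=y^{H}$ automatically misses the ($|y|$-free) relation $R$.

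I expect the only delicate point to be the second step: controlling the potentially larger group $G=\G(R')$ so that no element of $G\setminus H$ secretly permutes $S$ into the discarded part of $R$. The hypothesis $y^{G}\cap R=\varnothing$ is exactly the safeguard that rules this out, which is why it (rather than the weaker $y^{H}\cap R=\varnothing$) is the right assumption, and what makes the statement a genuine strengthening of \cite[Lemma~3.1]{SV}.
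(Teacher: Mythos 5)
Your proof is correct and takes essentially the same route as the paper's: the identical defining relation $R\cup y^{K}$ (the paper writes $Q=y^{K}$) and the identical three-step reverse inclusion — pass to $G$ by comparing cardinality classes, use $y^{G}\cap R=\varnothing$ to force the $|y|$-class to split as $R''\sqcup y^{K}$ and conclude invariance of $y^{K}$, then recover membership in $H$ and finally in $K$ via regularity of $y$. One harmless slip: the inclusion $R'\subseteq R$ alone does not yield $\G(R)\leq\G(R')$; the correct reason is that $R'$ is a union of cardinality classes of $R$, which is exactly the argument you use elsewhere in the proof.
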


\begin{proof}
Let $K\leq H$, and let $Q= y^K$. We show that $K= \G(R\cup Q)$. Obviously $K$ preserves $Q$, and as a subgroup of $H$, it preserves $R$. Consequently, $K \subseteq \G(R\cup Q)$.

Conversely, let $\gg\in \G(R\cup Q)$. We show that $\gg \in K$. Let $R''$ be the family of sets in $R$ of cardinality $|y|$. Then, $R\cup Q = R' \cup R'' \cup Q$. The cardinalities of sets in $R'$ are different than $|y|$, so $\gg$ preserves $R'$. It follows that $\gg\in G$. Hence, $\gg$ preserves $y^G$, and since $y^G\cup R = \varnothing$, it preserves $Q \subseteq y^G$. It follows that $\gg$ preserves $R''$, and therefore, $\gg$ preserves $R=R'\cup R''$. This implies that $\gg\in H$.
And finally, since $y$ is regular in $H$, and $\gg$ preserves $Q=y^K$, it follows that $\gg\in K$, as required.
\end{proof}

Now we prove three reduction lemmas. We will use the short notation $BGR(2)^{\#}$ for the class of groups $G\in BGR(2)$ that have a regular set, and such that each subgroup of $G$ belongs to $BGR(2).$

\begin{Lemma}\label{l:NHG:reg} Let $G$ be a simple group with subgroups $N$ and $H$ such that $N < H < G$. If $(G,G/H) \in BGR(2)$ and it has a regular set, then $(G,G/N)\in BGR(2)^{\#}$.
\end{Lemma}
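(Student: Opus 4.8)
The plan is to build an explicit defining relation for $(G,G/N)$ out of three orbit-families that successively force a symmetry (i) to respect the block system induced by $H$, (ii) to act on the blocks as an element of $(G,G/H)$, and (iii) to act correctly inside each block. The simplicity of $G$ enters precisely in (iii): since $H<G$, the action on $G/H$ is nontrivial, so its kernel is a proper normal subgroup and hence trivial, i.e.\ $G$ acts \emph{faithfully} on the blocks $G/H$. This faithfulness is exactly what lets us eliminate the residual within-block symmetry at the end.

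For the construction, fix a defining relation $\bar R$ with $\G(\bar R)=(G,G/H)$, and let $\bar y$ be a regular set of $(G,G/H)$, which exists by hypothesis. View $G/N$ as partitioned into the blocks given by the $N$-cosets lying in a common $H$-coset, each of size $m=[H:N]\ge 2$. Let $\mathcal B$ be the orbit of all blocks (viewed as $m$-element subsets of $G/N$), let $R_1$ be the $H$-compatible lift of $\bar R$ (the sets $x$ with $\bar x\in\bar R$), and let $z$ be a transversal of the blocks in $\bar y$, meeting each block of $\bar y$ in exactly one point and missing every other block, so that $\bar z=\bar y$. I would then set $R=\mathcal B\cup R_1\cup z^{G}$, first arranging, by the choice of $\bar y$ (passing to its complement if needed), that $|z|\neq m$; then the $m$-element members of $R$ are exactly the blocks. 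Note already that $z$ is regular in $(G,G/N)$: any $g$ stabilizing $z$ stabilizes its block-support $\bar z=\bar y$, hence projects to the identity of $(G,G/H)$ by regularity of $\bar y$, hence fixes every block and so equals the identity by faithfulness.

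The inclusion $(G,G/N)\subseteq\G(R)$ is clear, as $G$ preserves each family. For the converse, take $g\in\G(R)$. Since the $m$-element members of $R$ are exactly the blocks, $g$ permutes the blocks and induces $\bar g\in Sym(G/H)$; preserving $R_1$ then forces $\bar g$ to preserve $\bar R$, so $\bar g=\bar h$ for some $h\in G$. Replacing $g$ by $g'=gh^{-1}\in\G(R)$, we may assume $g'$ fixes every block setwise. Because $g'$ fixes blocks it preserves block-supports, and as $z^{G}$ consists of the non-$H$-compatible sets of its cardinality, $g'$ preserves $z^{G}$; writing $g'(zh_1)=zh_2$ and comparing supports gives $\bar y\,h_2=\overline{z h_2}=\overline{z h_1}=\bar y\,h_1$, so $h_2=h_1$ by regularity of $\bar y$ and hence $g'(zh_1)=zh_1$. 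Thus $g'$ fixes \emph{every} set in $z^{G}$. Inside any block $B\in\bar y$ the translates of $z$ by the stabilizer of $B$ (which acts transitively on $B$) meet $B$ in all one-point subsets, so $z^{G}$ separates points within blocks; as $g'$ already fixes each block, it follows that $g'=\mathrm{id}$ and $g=h\in G$. Hence $\G(R)=(G,G/N)$, giving $(G,G/N)\in BGR(2)$, and the regular transversal $z$ shows it has a regular set.

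Finally, to obtain the subgroup property I would invoke Lemma~\ref{l:sies} for $(G,G/N)=\G(R)$ with a regular set $w$ whose cardinality avoids $ar(R)$: taking $w$ with block-support $\bar y$ but a suitable number of points in each block keeps $w$ regular (its stabilizer again projects to a stabilizer of $\bar y$) while letting $|w|$ range widely enough to fall outside the finite set $ar(R)$. Lemma~\ref{l:sies} then yields that every subgroup of $(G,G/N)$ lies in $BGR(2)$, so $(G,G/N)\in BGR(2)^{\#}$. The conceptual heart is step (iii), and the clean resolution is the observation that a transversal of a regular set of blocks is automatically regular and has a point-separating orbit within blocks; the remaining difficulty is purely the cardinality bookkeeping needed to keep $\mathcal B$, $R_1$, $z^{G}$, and the set $w$ from colliding, which is where I expect the only real care to be required.
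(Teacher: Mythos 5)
Your proposal is correct and takes essentially the same route as the paper's proof: lift a defining relation from $G/H$ to $G/N$, adjoin the blocks and the $G$-orbit of an auxiliary set encoding the regular set $\bar y$ with a partial (non-$H$-compatible) part, use regularity of $\bar y$ together with faithfulness of the action on $G/H$ (this is where simplicity enters, in both proofs) to show any relation-preserving permutation acting trivially on blocks is the identity, and finish with Lemma~\ref{l:sies} via a modified regular set. The only substantive difference is bookkeeping — you separate the three families by $H$-compatibility within a cardinality class, whereas the paper's auxiliary sets $(y\cup\{N\})h$ have size $\equiv 1 \pmod{[H:N]}$ and so get disjoint arities for free; as a result your complementation dodge for arranging $|z|\neq m$ has one unhandled corner (when $[G:H]=2m$ and $|\bar y|=m$, so the complement of $\bar y$ also has size $m$), which is easily patched by placing two points of $z$ in a single block, exactly parallel to the paper's own explicit patch for its corner case $[H:N]=2$, $|y\cup\{N\}|=[G:N]/2$.
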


\begin{proof}
Let $R'$ be a relation on $G/H$ such that $\G(R')=(G,G/H)$. Since $(G,G/H)$ is transitive, we may assume without loss of generality that $H\gg \in R'$ for any $\gg\in G\}$. Let $R_0$ be the corresponding relation on $G/N$, that is, consisting of all sets $x\subseteq G/N$ such that there exists a set $x'\in R'$ and $x=\{Ng : g\in G \hbox{\rm\ and } Hg\in x'\}$.
We use it to form a relation $R$ on $G/N$ defining $(G,G/N)$. To this end, let $y$ be an $H$-compatible subset of $G/N$ such that $\bar{y}$ is a regular set of $(G,G/H)$ and $H \notin \bar{y}$. (Such a set exists, since by assumption, $(G,G/H)$ has a regular set, and the complement of a regular set is regular). We define
$R_1$ to consist of all sets of the form $(y\cup \{N\})\hh$ for $\hh \in G$.
Now, we define $R=R_0 \cup R_1$. Since each set in $R_1$ has cardinality not divisible by $[H:N]$, it follows that $ar(R_0) \cap ar(R_1) = \varnothing$. This means that a permutation $u$ preserves $R$ if and only if $u$ preserves each of $R_0$ and~$R_1$.

We show first that $\G(R)\supseteq (G,G/N)$.
We need to show that if $x\in R$, then $x\gg\in R$ for any $\gg \in G$. Indeed, if $x \in R_0$, then this is so, because $\gg$ preserves $R'$; if $x= (y\cup \{N\})\hh \in R_1$ for some $\hh\in G$, then
$x\gg = (y\cup \{N\})(\hh\gg)\in R_1$, as well.

We prove the opposite inclusion.
Let $\uu \in \G(R)$.
By definition of $R$, $\uu$ is a permutation of $G/N$ preserving the cosets of $H$, so we may consider its action on the set $G/H$. In this action, $\uu$ preserves $R'$, which means that there exists $\gg \in \G(R')= (G,G/H)$ whose action on $G/H$ is the same as $\uu$. It is enough to show that $\gg$ has the same action on $G/N$ as $\uu$.

Note that $\ww=\uu\gg^{-1}$ acts as the identity on $G/H$ and it preserves the relation $R$ (by the previous inclusion). Let
$N\hh$ be an arbitrary coset of $N$ with $\hh\in G$.
As $(y\cup \{N\})\hh \in R$, it follows that $(y\cup \{N\})\hh\ww \in R$.
Consequently,
$(y\cup \{N\})\hh\ww = (y\cup \{N\})\hh'$ for some $\hh'\in G$.
Thus, $y\hh\ww \cup \{N\}\hh\ww = y\hh'\cup \{N\}\hh'$.
Since $y$ is regular in $(G,G/H)$, and $w$ acts as the identity on $H$-cosets,
$\hh=\hh'$, and consequently,
$N\hh\ww = N\hh$. Thus, $\ww$ fixes $N\hh$ for any $\hh\in G$, which means that it acts as the identity on $G/N$, proving that $G\in BGR(2)$.

From what we have established it follows also that $y\cup\{N\}$ is a regular set in $(G,G/N)$. Now, the complement of $y\cup\{N\}$ is also regular and its cardinality is not in $ar(R)$, unless $[H:N]=2$, and $|y\cup \{N\}| =[G:N]/2$. In the latter case $y\cup\{N,Ng'\}$, where $Ng'\notin y$ and $g'\notin N$, is also regular and satisfies the assumptions of Lemma~\ref{l:sies}. Thus $G\in BGR(2)^{\#}$, as required.
\end{proof}

In the next lemma we use the idea {that appears in} \cite[Lemma~2.7]{DHM}.

\begin{Lemma}\label{l:NHG:n-1}
Let $G$ be a simple group with subgroups $N$ and $H$ such that $N < H < G$. If $[H:N]\geq [G:H]-1$, then $(G,G/N) \in BGR(2)^{\#}$.
\end{Lemma}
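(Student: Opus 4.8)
The plan is to exploit the large-block hypothesis to build an explicit regular set by the ``distinct intersection sizes'' device (the idea of \cite[Lemma~2.7]{DHM}), and then to upgrade this to a defining relation. Write $m=[G:H]$ and $s=[H:N]$, so that the hypothesis reads $s\geq m-1$. Let $\Omega=G/N$, and let $\mathcal B=\{B_1,\dots,B_m\}$ be the partition of $\Omega$ into the $m$ cosets of $H$, each of size $s$; this partition is an orbit of $(G,G/N)$ in its action on $s$-element subsets. Everything will rest on the fact that, since $N<H<G$ and $G$ is simple, the action of $G$ on $G/H$ is nontrivial and hence faithful: the kernel of $G\to Sym(G/H)$ is a proper normal subgroup, so it is trivial.

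\emph{Step 1 (a regular set).} Since $s\geq m-1$, the integers $0,1,\dots,m-1$ all lie in $\{0,\dots,s\}$, so I can choose $x\subseteq\Omega$ with $|x\cap B_i|=i-1$ for $i=1,\dots,m$. If $\gg\in G$ stabilizes $x$ setwise, then the permutation of $\mathcal B$ induced by $\gg$ preserves the intersection numbers $|x\cap B_i|$; as these are pairwise distinct, $\gg$ fixes every block setwise, hence lies in the kernel of the action on $G/H$ and so $\gg=1$. Thus $x$ is regular, and its complement is regular as well. This already yields that $(G,G/N)$ has a regular set.

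\emph{Step 2 (a defining relation).} I set $R=\mathcal B\cup x^{G}$. The two parts have disjoint arities, $ar(\mathcal B)=\{s\}$ and $|x|=m(m-1)/2$ (the finitely many small cases where these coincide being checked separately, e.g.\ by passing to complements), so $\uu\in\G(R)$ iff $\uu$ preserves each part. Preserving $\mathcal B$ forces $\uu$ to permute the blocks, i.e.\ $\uu$ is block-respecting. Preserving $x^{G}$, together with the distinct block-sizes, forces the induced block permutation to agree with that of some $\gg\in G$; replacing $\uu$ by $\ww=\gg^{-1}\uu$ I may assume $\ww$ fixes every block setwise and fixes $x$ setwise. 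It remains to show $\ww=1$. The \textbf{main obstacle} is exactly this within-block rigidity: I must show that the translates $\{\gg x:\gg\in G\}$ \emph{separate points within a block}, i.e.\ for any two distinct $a,b$ lying in a common block there is a translate containing exactly one of them; granting this, the fingerprint ``which translates contain $p$'' is injective on each block, and a block-fixing $\ww$ preserving $x^{G}$ must fix every point. I plan to secure separation by using the freedom, guaranteed by $s\geq m-1$, to choose the $(i-1)$-element sets $x\cap B_i$ asymmetrically; the delicate boundary case is $s=m-1$, where the largest block $B_m$ is \emph{full} and offers no internal slack. In that case I will instead augment $R$ with sets obtained by adjoining a single distinguished point, as in the proof of Lemma~\ref{l:NHG:reg}, and recover $\ww=1$ from the pinning argument used there.

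Once $\G(R)=(G,G/N)$ is established, $(G,G/N)\in BGR(2)$. For the subgroup-closure part I invoke Lemma~\ref{l:sies}: I choose a \emph{second} regular set $x'$, again by prescribing pairwise distinct block-intersection sizes but arranging $|x'|\notin ar(R)$ (there are many admissible size-profiles, so such a choice exists outside a few small cases handled directly). Lemma~\ref{l:sies} then gives that every subgroup of $(G,G/N)$ belongs to $BGR(2)$, and combined with Step~1 this yields $(G,G/N)\in BGR(2)^{\#}$, as required. I expect the verification of point-separation and the treatment of the tight case $s=m-1$ to be where essentially all of the work lies; the regular-set construction itself is immediate from the index hypothesis and simplicity.
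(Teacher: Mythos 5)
Your Step~1 and the skeleton of Step~2 (the relation $R=\mathcal{B}\cup x^{G}$, distinct intersection sizes to decode the block action, Lemma~\ref{l:sies} for subgroup closure) match the paper's proof, but the step you yourself flag as the main obstacle is genuinely missing, and your two proposed ways to close it do not work as stated. You never prove that the translates of $x$ separate points within a block, and your fallback for the tight case $s=m-1$ --- augmenting $R$ ``as in the proof of Lemma~\ref{l:NHG:reg}'' --- is inapplicable: that device requires a regular set for $(G,G/H)$, which is exactly what is unavailable in the situations this lemma is designed for (e.g.\ $(G,G/H)=A_n$ in the proof of Proposition~\ref{p:ti}). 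The paper closes the gap without any separation property, by a pinning-plus-conjugation trick: the profile is chosen so that some block meets $y$ in \emph{exactly one} point $\alpha$. If $\ww$ fixes every block and preserves $R$, and $\beta\ww\neq\beta$, take $\gg\in G$ with $\alpha\gg=\beta$ and set $\vv=\gg\ww\gg^{-1}$; then $\vv$ preserves $R$, is trivial on blocks, so $y\vv=y\hh'$ forces $\hh'$ to be trivial on blocks, hence $\hh'=1$ by simplicity (faithfulness on $G/H$), so $y\vv=y$ and $\vv$ fixes $y\cap s_1=\{\alpha\}$ --- contradicting $\alpha\vv\neq\alpha$. Transitivity of $G$ thus transports the single pinned point everywhere, and no within-block fingerprint is needed. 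Note that your own $x$ (with $|x\cap B_2|=1$) already supports this argument verbatim; the asymmetric-choice separation you plan would, in effect, require proving the lemma itself (a failure of separation produces a nontrivial block-fixing transposition in $\G(R)$).

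There is a second genuine error in your Lemma~\ref{l:sies} step: you claim ``many admissible size-profiles'' allow a second regular set $x'$ with $|x'|\notin ar(R)$, outside a few small cases. In the tight case $s=m-1$ this fails for \emph{every} $m$, not just small ones: distinct intersection sizes drawn from $\{0,\dots,s\}=\{0,\dots,m-1\}$ across $m$ blocks force the profile to be exactly $\{0,1,\dots,m-1\}$, so every such regular set has cardinality $m(m-1)/2=|x|\in ar(R)$, and the complement of a distinct-profile set has the same cardinality $ms-m(m-1)/2=m(m-1)/2$, so complements do not help either. The paper handles precisely this case by adding one point to $y$ so that two blocks meet $y'$ in a single point each (all other intersection sizes pairwise distinct); such a $y'$ is regular unless the corresponding block transposition lies in the induced action, and since the pair of blocks can be chosen arbitrarily and $(G,G/N)\neq S_n$, a regular set of cardinality $|y|+1\notin ar(R)$ exists. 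Without this (or an equivalent) argument, your proof of $(G,G/N)\in BGR(2)^{\#}$ is incomplete exactly where the hypothesis $[H:N]\geq[G:H]-1$ is sharp.
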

\begin{proof}
Denote $[G:H]=n$, and let $s_1,\ldots,s_{n}$ denotes a partition of $G/N$ into sets corresponding to $H$-cosets of $G$. Then, $|s_i| = [H:N] \geq n-1$.

Let $y$ be a subset of $G/N$ such that for all $i,j \leq n$, if $i\neq j$ then the cardinality $|y\cap s_i|\neq |y\cap s_j|$. Such a set $y$ exists, since $|s_i| =[H:N]\geq n-1$ for all $i$. We may assume that $1$ and $[H:N]$ are among the cardinalities $|y\cap s_i|$, which guarantees that $|y|>|s_i|$. To fix the notation, we assume that it is $s_1$ with $|y\cap s_1|=1$.

We define a relation $R$ as one consisting of all sets $s_i$ and all sets $x= y\hh$ for any $\hh\in G$.
Note that, since $|y|\neq |s_i|$, the arity $ar(R) = \{|s_i|,|y|\}$.

The most important property of $y$ is that as $|y\cap s_i|$ gives an injective labelling of $H$-cosets, $y$ may be used to decode the permutations of $H$-cosets.
More precisely, if $u$ and $w$ are arbitrary permutations of $G/N$ that preserve the partition into sets $s_i$ then the condition $yu=yw$ implies that $u$ and $w$ have the same induced action on the sets $s_i$ ($H$-cosets).

Now, similarly as in the previous proof, it is easy to see that $\G(R) \supseteq (G/N)$. For the opposite inclusion, assume that $\uu\in\G(R)$. Then, since $|y|\neq |s_i|$, $yu = y\hh$ for some $\hh\in G$. By the property of $y$ pointed out above, $\uu$ acts on $H$-cosets as $\hh$.
Hence, $\ww=\uu\hh^{-1}$ acts on $G/H$ as the identity and preserves $R$. Again, all we need to show is that $w$ acts as the identity on $G/N$.

Suppose to the contrary that there exists a coset $\beta \in G/N$ such that $\beta\ww \neq \beta$. Let $\alpha$ denotes the unique $N$-coset in $y\cap s_1$, and let $\gg\in G$ be such that $\alpha\gg = \beta$. Then for $\vv=\gg\ww\gg^{-1}$ we have:
(i) $\alpha\vv \neq \alpha$, (ii) $\vv$ acts as the identity on $H$-cosets, and (iii) $\vv$ preserves $R$. By (iii), $y\vv = y\hh'$ for some $\hh'\in G$. By (ii) and the property of $y$, $y\vv=y$. Consequently, $\vv$ fixes $y \cap s_1 = \{\alpha\}$, which is a contradiction with (i). This proves that $(G,G/N)\in BGR(2)$.

From the proof it follows that $y$ is a regular set in $(G,G/N)$. We show that there exists a point $\alpha=Ng$ such that $y'=y\cup\{\alpha\}$ is regular, as well.
Indeed, if it is possible to add a point to $y$ with keeping the property of injective labeling of $H$-cosets, then we are done. Otherwise, if adding a point always causes that two sets $y\cap s_i$ and $y\cap s_j$, for $i\neq j$, become equinumerous in $y'$ (which happens when $[H:N]=[G:H]=1$), then we can choose an arbitrary pair $i,j$ and add a point in such a way that $|y'\cap s_i|=|y'\cap s_j|=1$, and otherwise the cardinalities $|y'\cap s_i|$ are pairwise different. Such a set $y'$ is regular in $(G,G/N)$, by the same argument as that applied to $y$, unless the transposition $(i,j)\in (G,G/N)$. Since the pair $i,j$ can be chosen arbitrarily, $(G,G/N)$ has a regular set of cardinality $|y|+1 \notin R$ (unless $(G,G/N)=S_n$, which is not the case). Applying Lemma~\ref{l:sies} completes the proof.
\end{proof}

Combining the approaches of the two above lemmas we may get still something more.

\begin{Lemma}\label{l:NHG:d}
Let $G$ be a simple group with subgroups $N$ and $H$ such that $N < H < G$. If $(G,G/H)\in BGR(2)$ and $[H:N]\geq D(G,G/H)-1$, then $(G,G/N) \in BGR(2)$ and has a regular set. If in addition, $D(G,G/H)$ does not divide $[G:H]$ or $[H:N]> D(G,G/H)-1$, then $(G,G/N)\in BGR(2)^{\#}$.
\end{Lemma}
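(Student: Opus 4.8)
The plan is to combine the two preceding lemmas by replacing the ad hoc injective labelling of Lemma~\ref{l:NHG:n-1} with a labelling that uses a \emph{distinguishing partition} of $(G,G/H)$. Recall that $D(G,G/H)$ is the least $k$ so that $G/H$ admits a partition into $k$ parts stabilised (as a set-partition) only by the identity. So first I would fix such a partition $P_1,\ldots,P_d$ of $G/H$ into $d=D(G,G/H)$ parts. Writing $s_1,\ldots,s_n$ for the blocks of $G/N$ corresponding to the $H$-cosets (so $|s_i|=[H:N]$ and $n=[G:H]$), I would use the hypothesis $[H:N]\ge d-1$ to build a subset $y\subseteq G/N$ whose \emph{profile} $(|y\cap s_i|)_i$ encodes both the colour of $s_i$ under the distinguishing partition \emph{and} enough extra data to recover the individual cosets. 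Concretely: for each coset $s_i$, reserve one value of $|y\cap s_i|$ to record ``which part $P_j$ does $s_i$ lie in,'' which needs at most $d$ distinct values $0,1,\ldots,d-1$, hence $d-1$ nonzero intersection sizes, which is exactly what $[H:N]\ge d-1$ buys. The point of the distinguishing partition (as opposed to a fully injective labelling) is that a permutation of $G/H$ preserving the colour classes must be the identity, so no single coset needs to be individually pinned down at the level of $G/H$.

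The decoding argument then mirrors the two earlier proofs. I would show $\G(R)\supseteq(G,G/N)$ as before, where $R$ consists of the blocks $s_i$ together with all translates $y\hh$, $\hh\in G$; here the arity is $\{[H:N],|y|\}$ and I would arrange $|y|\ne[H:N]$ so that preserving $R$ forces a permutation to preserve each layer separately. For the reverse inclusion, take $\uu\in\G(R)$; since the $s_i$ are preserved, $\uu$ induces a permutation of $G/H$, and from $y\uu=y\hh$ together with the colour-encoding property of the profile, $\uu$ must permute the parts $P_j$ among themselves in the same way some $\hh\in\G(R')=(G,G/H)$ does. But a distinguishing partition is stabilised only by the identity, so $\uu$ and $\hh$ in fact agree on \emph{all} of $G/H$, not merely on colour classes. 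Then $\ww=\uu\hh^{-1}$ fixes $G/H$ pointwise, and a conjugation argument identical to that in Lemma~\ref{l:NHG:n-1}---transporting a hypothetical moved $N$-coset $\beta$ back to a distinguished coset $\alpha$ where $y\cap s_1$ is a singleton, via some $\gg\in G$ and $\vv=\gg\ww\gg^{-1}$---shows $\ww$ acts as the identity on $G/N$. This gives $(G,G/N)\in BGR(2)$ and exhibits $y$ as a regular set.

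For the final $BGR(2)^{\#}$ conclusion I would, as in Lemma~\ref{l:NHG:n-1}, enlarge $y$ to a regular set $y'=y\cup\{\alpha\}$ of cardinality $|y|+1$ whose size lies outside $ar(R)$, and then invoke Lemma~\ref{l:sies} to conclude every subgroup belongs to $BGR(2)$. The two supplementary hypotheses---that $D(G,G/H)\nmid[G:H]$ or that $[H:N]>D(G,G/H)-1$---are exactly the slack needed to guarantee this enlargement can be done while preserving regularity and while keeping $|y'|\notin ar(R)$: the strict inequality $[H:N]>d-1$ leaves an unused intersection value with which to add a point, and the non-divisibility condition prevents the boundary coincidence (analogous to the $[H:N]=[G:H]$ degenerate case in the previous lemma) in which adding any point would force two blocks to share a profile value and create an unwanted symmetry.

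I expect the main obstacle to be verifying that the profile genuinely \emph{decodes} the distinguishing colouring rather than merely recording it: one must check that a profile-preserving permutation is forced to respect the colour classes as a set-partition (so that the distinguishing property is applicable), and that the encoding uses no more than the $[H:N]\ge d-1$ available intersection sizes. Getting the bookkeeping of reserved values to match $D(G,G/H)-1$ exactly, and handling the borderline divisibility case cleanly for the regular-set enlargement, is where the care is needed; the translation and conjugation steps are routine adaptations of Lemmas~\ref{l:NHG:reg} and~\ref{l:NHG:n-1}.
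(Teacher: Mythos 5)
Your construction omits the ingredient that makes the hypothesis $(G,G/H)\in BGR(2)$ necessary, and the decoding step fails as a result. You take $R$ to consist only of the blocks $s_i$ and the translates $y\hh$, and from $y\uu=y\hh$ you deduce that the induced permutation $\bar{\uu}$ of $G/H$ agrees with $\bar{\hh}$ ``because a distinguishing partition is stabilised only by the identity.'' But the distinguishing property quantifies only over elements \emph{of the group} $(G,G/H)$: the full stabiliser of the colouring inside $Sym(G/H)$ is the product of the symmetric groups on the parts $P_j$, which is huge whenever some $|P_j|\geq 2$ (the relevant cases here have $d\geq 3$, so the parts are large). All that the profile of $y$ gives you is that $\bar{\uu}\bar{\hh}^{-1}$ preserves each $P_j$ setwise; since nothing in your $R$ forces $\bar{\uu}$ to lie in $(G,G/H)$, you cannot conclude $\bar{\uu}=\bar{\hh}$. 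This is precisely why the injective-profile argument of Lemma~\ref{l:NHG:n-1} (which pins down the induced coset permutation completely, with no $BGR(2)$ assumption on $(G,G/H)$) does not carry over to a mere $d$-colouring. Tellingly, your text invokes ``$\hh\in\G(R')=(G,G/H)$'' although no relation $R'$ occurs in your $R$ --- that phantom $R'$ is exactly the missing piece.

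The paper's proof repairs this by setting $R=R_0\cup R_1$, where $R_1=y^G$ and $R_0$ is the lift to $G/N$ of a defining relation $R'$ on $G/H$ with $\G(R')=(G,G/H)$ (this is where $(G,G/H)\in BGR(2)$ is genuinely used), normalised so that all singletons lie in $R'$, hence all $s_i\in R$. The arities of $R_0$ and $R_1$ may now collide, but one checks that any $\uu\in\G(R)$ preserves each part separately: the sets of minimal cardinality $[H:N]$ in $R$ are exactly the $s_i$, so $\uu$ preserves the coset partition, and since the paper's choice $c_1=1$ makes the translates $y\hh$ the only members of $R$ that are \emph{not} unions of $H$-cosets, $\uu$ preserves $R_1$ and hence $R_0$. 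Only then does $\bar{\uu}$ preserve $R'$, giving $\bar{\uu}\in(G,G/H)$; the colouring enters solely through property $(*)$, inside the conjugation argument of Lemma~\ref{l:NHG:n-1} (using the singleton $y\cap s_1$), to show $\ww=\uu\hh^{-1}$ is trivial on $G/N$. Your endgame also deviates: instead of adjoining a point to $y$ (which can disturb the colour encoding), the paper varies the parameters --- if $[H:N]>d-1$ it replaces some $c_i$, $i\neq 1$, by an unused value to change $|y|$, and if $d\nmid[G:H]$ two colour classes have different sizes, so permuting $c_1,\ldots,c_d$ changes $|y|$ --- producing in either case a regular set meeting the hypotheses of Lemma~\ref{l:sies}. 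Your overall plan is in the right spirit, but without $R_0$ the reverse inclusion $\G(R)\subseteq(G,G/N)$ is unproved and the argument has a genuine gap.
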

\begin{proof}
We use the ideas of the previous proof.
Denote $d=D(G,G/H)$, and let $s_1,\ldots,s_{n}$ denotes a partition of $G/N$ into sets corresponding to $H$-cosets of $G$. Then, $|s_i| = [H:N] \geq d-1$.
In addition, we consider the partition $t_1,\ldots, t_{d}$ of $s_1,\ldots, s_{n}$ into $d$ sets corresponding to the distinguishing partition of $(G,G/H)$.

Let $c_1, c_2,\ldots,c_d$ be a sequence of pairwise different numbers with $c_1=1, c_d=[H:N] $, and $0\leq c_i \leq [H:N]$, otherwise. The condition $[H:N]\geq D(G,G/H)-1$ guarantees that such a sequence exists. Let $y$ be a subset of $G/N$ such that for each $s_i\in t_j$, $|y\cap s_i|=c_i$. Then
$|y|>|s_i|$.

As in the proof of Lemma~\ref{l:NHG:reg}, let $R_0$ be a relation on $G/N$ corresponding to a relation $R'$ on $G/H$ such that $(G,G/H)= \G(R')$. Again, we assume that $H\gg\in R'$ for all $\gg\in G$.

We define $R=R_0\cup R_1$, where $R_1$ consists of all sets $x= y\hh$ for any $\hh\in G$. We have $ar(R) = ar(R_0) \cup ar(R_1)$.
A problem, comparing with the previous proof, is that this union may not be disjoint. However, all we need is the fact that each permutation $v$ preserving $R$ preserves $R_1$.
Indeed, since all $s_i\in R$ and $|y|>|s_i|$, $v$ preserves $H$-cosets, and since $y\hh$ are the only sets in $R$ that are not the unions of $H$-cosets, $v$ preserves $R_1$.

Further, since $G$ acts faithfully on $G/H$, and $d=D(G,G/H)$ is the distinguishing number for $(G,G/H)$, we have the following property of $y$ (which is enough for our purposes in this case):

$(*)$ if $h\in G$ and $h\neq 1$, then $|y\cap s_i| \neq |yh\cap s_i|$ for some $i\leq n$.

Again, it is easy to see that $\G(R) \supseteq (G/N)$. For the opposite inclusion, assume that $\uu\in\G(R)$. Then, since $\uu$ preserves $R_1$, it acts on $H$-cosets as some $\hh\in G$.
Hence, $\ww=\uu\hh^{-1}$ acts on $G/H$ as the identity and preserves $R$. All we need to show is that $w$ acts as the identity on $G/N$. This part is the same as in the previous proof.

Again, from the proof it follows that $y$ is a regular set in $(G,G/N)$. We wish to modify it so it is still regular, but and satisfies the assumptions of Lemma~\ref{l:sies}. If $[H:N]> D(G,G/H)-1$, then we have room to change one of the numbers $c_i$, $i\neq 1$, so that the resulting set $y'$ has the properties of $y$, but $|y'|\neq |y|$. If $D(G,G/H)$ does not divide $[G:H]$, then the partition $t_1,\ldots,t_d$ of $s_1,\ldots,s_n$ has blocks of different cardinalities, and then it is enough to change the order of $c_1,\ldots,c_d$ to get regular sets satisfying the assumptions
of Lemma~\ref{l:sies},   proving the claim.
\end{proof}

\emph{Remark}. Let us observe that the three lemmas above hold for arbitrary groups $G > H > N$, and the only requirement is that both actions on $G/H$ and $G/N$ are faithful, and $G$ is different from $S_n$. 

We will combine the above lemmas with what is known on primitive groups. In \cite{ser}, Seress lists all primitive groups that have no regular set (cf. \cite[Theorem~2.2]{SV}). Using this list one can distinguish all simple groups in primitive action that have no regular set. In the list below the first entry denotes the degree of the action, and the second---the group itself. If the group is abstractly isomorphic to one of $A_n$ or another group in the list with a different name, then this is indicated. There are no other isomorphisms between the groups in the list except those indicated. (This may be inferred from the classification of the finite simple groups, using, e.g., \cite[Appendix~A]{DM}).
\bigskip

${\mathcal L} = \{(6, L_2(5)\!\cong\! A_5)$,
$(7, L_3(2)\!\cong\! L_2(7))$,
$(8, L_2(7)\!\cong\! L_3(2))$,
$(9, L_2(8))$,

\hspace*{8mm}$(10, L_2(9)\!\cong\! A_6)$,
$(11, L_2(11))$,
$(11, M_{11})$,
$(12, M_{11})$,
$(12, M_{12})$,

\hspace*{8mm}$(13, L_3(3))$,
$(15, L_4(2) \!\cong\! A_8)$,
$(22, M_{22})$,
$(23, M_{23})$,
$(24, M_{24})\}$.
\bigskip

Combining this with \cite[Theorem~4.2]{SV} 
we obtain the following:

\renewcommand{\labelenumi}{(\roman{enumi})}
\begin{Lemma}\label{l:bez_reg}
Let $G$ be a simple primitive permutation group of degree $n\geq 2$ different from $A_n$. Then the following hold:
\begin{enumerate}
\item $G$ has a regular set if and only if $G\notin {\mathcal L}$.
\item $G\in BGR(2)$ if and only if $G\neq C_5, PSL(2,8)$
\end{enumerate}
\end{Lemma}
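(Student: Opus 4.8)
The plan is to obtain both statements by combining the two classification results cited immediately before the lemma, together with a finite amount of bookkeeping to isolate the simple groups. The two parts are essentially independent: part (i) rests on Seress's classification \cite{ser} (equivalently \cite[Theorem~2.2]{SV}) of primitive groups without a regular set, while part (ii) is a direct specialization of \cite[Theorem~4.2]{SV}.

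For part (i), I would start from the complete list of primitive permutation groups that possess no regular set, as given in \cite{ser}. That theorem asserts that, apart from $S_n$ and $A_n$ in their natural actions, there are only finitely many such primitive groups, all explicitly listed. The task is then to extract from this finite list exactly those entries that are simple as abstract groups and are not the natural action of $A_n$. Reading off the abstract isomorphism type of each entry, using the classification of the finite simple groups together with the tables in \cite[Appendix~A]{DM}, one checks that precisely the fourteen entries recorded in $\mathcal{L}$ survive this filtering. The delicate point is bookkeeping rather than new mathematics: several entries, namely $(6,L_2(5)\cong A_5)$, $(10,L_2(9)\cong A_6)$ and $(15,L_4(2)\cong A_8)$, are abstractly alternating groups yet must be \emph{retained}, because they are primitive actions different from the natural one; conversely one must confirm that no two entries collapse to the same permutation group and that no further hidden isomorphisms occur, which is exactly what \cite[Appendix~A]{DM} guarantees. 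Once this is verified, both implications follow at once: if $G\in\mathcal{L}$ then $G$ appears in Seress's list and so has no regular set, while if $G$ is simple, primitive, $G\neq A_n$ and $G\notin\mathcal{L}$, then $G$ does not occur among the exceptions and hence has a regular set.

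For part (ii), I would invoke \cite[Theorem~4.2]{SV} directly. That theorem describes precisely which primitive simple groups other than $A_n$ belong to $BGR(2)$, its conclusion being that the only exceptions are $C_5$ and $PSL(2,8)$ in their natural actions. Since both $C_5$ (degree $5$) and $PSL(2,8)$ (degree $9$) are indeed simple primitive groups distinct from $A_n$, the stated equivalence is merely a restatement of that theorem.

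The main obstacle, such as it is, lies entirely in part (i): one must transcribe Seress's list correctly and carry out the isomorphism identifications without error, being especially careful about the entries that are abstractly $A_m$ for some $m\neq n$. There is no genuinely new argument to supply beyond this filtering, so the heart of the proof is ensuring the completeness and non-redundancy of the list $\mathcal{L}$, which is underwritten by the classification of the finite simple groups.
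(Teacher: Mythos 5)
Your proposal is correct and is essentially the paper's own argument: the paper likewise obtains part (i) by taking Seress's list \cite{ser} (cf. \cite[Theorem~2.2]{SV}) of primitive groups without regular sets, filtering out the simple entries other than the natural $A_n$, and certifying the isomorphism identifications (including the retained non-natural actions of $A_5$, $A_6$, $A_8$) via the classification and \cite[Appendix~A]{DM}, while part (ii) is read off directly from \cite[Theorem~4.2]{SV}. The only difference is cosmetic bookkeeping, and your count of the entries of $\mathcal{L}$ agrees with the list as printed.
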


Now we prove the main result of this section. We use the notation $G^+$ for the permutation group obtained from $G$ by adding an extra fix point. \bigskip

\noindent\emph{Proof of Proposition}~\ref{p:ti}. Let $G$ be a simple permutation group that is transitive but not primitive. Then it is permutation isomorphic to $(G,G/N)$, where $N$ is proper, not maximal subgroup of $G$. Hence there exist a maximal subgroup $H$ of $G$ such that $N < H < G$. Since $H$ is maximal, $(G,G/H)$ is primitive. If $(G,G/H)\in BGR(2)$ and has a regular set then the result holds by Lemma~\ref{l:NHG:reg}. Otherwise, $(G,G/H)$ is either $A_n$ or one of the fifteen groups listed in Lemma~\ref{l:bez_reg}.

First, we consider the case when $(G,G/H)=A_n$. It follows that $H=A_{n-1}^+$, $[G:H]=n$, and for $n> 5$, we have $[H:N']=[A_{n-1}^+:N'] \geq n-1$ for any $N' < A_{n-1}^+$. Then, by Lemma~\ref{l:NHG:n-1}, $(G,G/N)\in BGR(2)$ and has a regular set, as required. If $n=5$, $H=A_4^+$, and $[G:H]=5$. Then, up to conjugation, there are~$4$ proper subgroups $N$ of $H$ (this can be checked using GAP or other systems for computation in permutation groups). For three of these subgroups the index $[H:N] \geq 4$. Hence, we may use Lemma~\ref{l:NHG:n-1} to infer the required claim. The lemma does not apply only in the case when $N$ is of order~$4$. In this case $N=K_4^+$ is the Klein $4$-group (extended by a one fixed point).

We compute this case directly using GAP. Here $(G,G/N)$ is $A_5$ acting on $15$ elements. We can find it in the list of transitive groups as the group $T$ numbered 15T5, generated by permutations
\begin{align*}
g&=(1,9,10,3,14)(2,15,7,12,6)(4,5,11,13,8), \hbox{\rm\ and} \\
h&=(1,4,10)(2,5,8)(3,7,11)(6,9,15)(12,14,13),
\end{align*}
on the set $\Omega =\{1,\ldots,15\}$.
It is imprimitive group with blocks
$$B=\{\{ 1, 6, 8 \}, \{ 2, 4, 9 \}, \{ 3, 7, 11 \}, \{ 5, 10, 15 \}, \{ 12, 13, 14 \}\}.$$
These blocks and the orbits of some sets in $T$ are used to define $T$ by a relation. Let $R_1=\{1,2\}^T$, $R_2= (\Omega\setminus \{1,2,4\})^T$, and let $R=B\cup R_1 \cup R_2$. Then, it is not difficult to check that that $T= G(R)$. It is enough to observe that
$\G(R)$ is contained in $\G(B)$, which is isomorphic to $S_3 \wr S_5$, and to verify computationally that the cardinality of the stabilizer of $R_1 \cup R_2$ in $\G(B)$ is exactly $|A_5|$.
In addition, one can easily check that $T$ has regular sets of sizes from $3$ to $12$. (We have simply asked GAP about the cardinality of the orbits in the action of $G$ on $k$-element subsets, for each $k$. If this cardinality is equal to the order of $G$, it means that each set in the orbit is regular. In other places of this paper we have used this approach, we just give information about cardinalities of regular sets.)

Now, consider the cases when $(G,G/H)$ is one of the fifteen groups listed in Lemma~\ref{l:bez_reg}. First we assume that $(G,G/H) \in BGR(2)$, i.e., is different from $PSL(2,8)$ or $C_5$. We intend to combine Lemma~\ref{l:NHG:d} with
\cite[Theorem~1]{DMH} and Table~2 in Section~3, which shows that $D(G,G/H)\leq 4$ for these groups.
If $D(G,G/H) =2$ or $D(G,G/H) =3$ and $[G:H]$ is not divisible by $3$, then by Lemma~\ref{l:NHG:d}, $(G,G/N)\in BGR(2)^{\#}$, as required.

If $D(G,G/H) =3$ divides $[G:H]$, then \cite[Table~2]{DMH} shows that $(G,G/H)$ is $M_{11}(12)$ acting on 12 elements or $M_{24}$ in the natural action. For $(G,G/H) = M_{11}(12)$, up to conjugation, there is only one subgroup $H$ of $M_{11}$ of index $12$, and all subgroups $N<H$ have index $[H:N]$ at least $11$. By Lemma~\ref{l:NHG:d}, it follows that $(G,G/H)\in BGR(2)^{\#}$. For $(G,G/H) = M_{24}$ there is only one subgroup $H$ of $M_{24}$ of index $24$, and all subgroups $N<H$ have index $[H:N]$ at least $23$. Again, the claim follows by Lemma~\ref{l:NHG:d}.

For $D(G,G/H) =4$ we have only $3$ groups to consider: $PSL(3,2), M_{11}$, and $M_{12}$, and this requires some computation.
\emph{Case 1.} Let $(G,G/H)=PSL(3,2)$. Then, as $H$ we may take any subgroup of index $7$ (since we know that $PSL(3,2)$ has only one primitive action on $7$ elements). Now, for any subgroup $N$ of $H$ of index $[H:N]>2$, by Lemma~\ref{l:NHG:d}, $(G,G/N)\in BGR(2)^{\#}$, as required. Up to conjugation, there is only one subgroup $N$ of $H$ of index $2$. Then, $(G,G/N)$ is a transitive permutation group of degree $14$ abstractly isomorphic to $PSL(3,2)$. This is $T=14{\rm T}10$ imprimitive group generated by permutations
\begin{align*}
g &=(1,5,9,13,3,7,11)(2,6,10,14,4,8,12), \hbox{\rm\ and} \\
h &= (1,10,6,14,11,9,12)(2,5,8,3,13,7,4),
\end{align*}
with blocks
$$B = \{\{1,8\},\{2,9\},\{3,10\},\{4,11\},\{5,12\},\{6,13\},\{7,14\}\}. $$
Now, one can check that for $R=B\cup \{1,2,3,4\}^T$, $T=\G(R)$.
Also, it has regular sets of all sizes from $3$ to $11$, which means that $T\in BGR(2)^{\#}$, as well.

\emph{Case 2.} Let $(G,G/H)=M_{11}$. Then $H$ is the only subgroup of index $11$. By Lemma~\ref{l:NHG:d}, for any subgroup $N$ of $H$ of index $[H:N]>3$, $(G,G/N)\in BGR(2)^{\#}$. Up to conjugation, there is only one subgroup $N$ of $H$ of index less than $4$. It the subgroup $N$ of $H$ of index $2$ isomorphic to $A_6$. Then, $(G,G/N)$ is the transitive permutation group $T=22{\rm T}22$, of degree $22$.

We describe how to obtain a relation defining $T$. First, we take the two-element blocks of imprimitivity, and consider the action of $M_{11}$ on these blocks. Now, $M_{11}=\G(R)$ for some relation $R$ (pointed out in \cite{SV}). We take the induced relation $R'$ on the two-element blocks of $T$ (obtained by replacing each point in $R$ by two points of the corresponding block). Then, all the sets in $R'$ are of an even cardinality. It remains to check, that there exists a $3$-element set whose $T$-orbit together with $R'$ yields a relation generating $T$ (since almost all $3$-element sets have this property, we omit the details). In addition one easily checks that $T$ has regular sets of all sizes from $6$ to $18$, so $5\notin ar(R)$, proves the required claim.

\emph{Case 3.} Let $(G,G/H)=M_{12}$. Since there is only one action of $M_{12}$ on $12$ elements, one may take as $H$ any of the two subgroups of index $11$. As before, by Lemma~\ref{l:NHG:d}, for any subgroup $N$ of $H$ of index $[H:N]>4$, $(G,G/N)\in BGR(2)^{\#}$. Since, there are no subgroup $N$ of $H$ of index less than $11$, the case is completed.

To complete the proof, it remains to consider two groups not in $BGR(2)$.
For $C_5$ we have nothing to prove, since it has no nontrivial subgroups.
For $(G,G/H)=PSL(2,8)$, we apply Lemma~\ref{l:NHG:n-1}, as follows.

First, $H$ is the only subgroup of index $9$. By Lemma~\ref{l:NHG:n-1}, for any subgroup $N$ of $H$ of index $[H:N]\geq 8$, $(G,G/N)\in BGR(2)^{\#}$. Up to conjugation, there is only one subgroup $N$ of $H$ of index less than $8$. This is a group isomorphic abstractly to $C_2^3$. The group $T=(G,G/N)$ is of degree $63$, which makes it hard for a direct computation.

We describe how one can modify the proof of Lemma~\ref{l:NHG:d} to handle this special case. The only assumption that fails to hold is $(G,G/H)\in BGR(2)$. So, rather than the group $(G,G/H)$ itself we consider a construction using two-point stabilizer of $(G,G/H)$, which is in this case $G_{\alpha,\beta} = C_7^{++}$ (the cyclic group on 7 elements with two extra fix points).

So, let $s_1,\ldots,s_{n}$, as before, be a partition of $G/N$ into the sets corresponding to $H$-cosets of $G$, and let $s_1$ and $s_2$ corresponds to the $H$-cosets $\alpha$ and $\beta$. Let
$R'$ be a relation on $G/H \setminus (\alpha\cup \beta)$ such that $C_7=\G(R')$ and corresponding to the stabilizer $G_{\alpha,\beta}$ (see e.g., \cite[p.~385]{kis1} for such a relation). We construct the corresponding relation on $G/N$: for each $x\in R'$ by $x'$ we denote the subset of $G/N$ consisting of all cosets $N\gg$ such that $H\gg\in R'$. In addition, we choose three points: $a\in s_1$ and $b,c\in s_2$, and we put $x'' = x'\cup\{a,b,c\}$. By $R''$ we denote the family of all sets $(x'')\gg$ for all $x\in R'$ and $\gg\in G$. We put $R_0=R'' \cup \{s_1,\ldots,s_{n}\}$. The rest of the proof is the same as that of Lemma~\ref{l:NHG:reg}. Checking that this construction works is left to the reader.
\qed

\section{Subdirect sum of permutation groups}\label{s:ss}
In the next section we consider intransitive simple groups. Here, the basic tool is a result on the subdirect sum of permutation groups \cite{kis1}.
In order to make this paper self-contained we recall here the notion of the \emph{subdirect sum} and the basic result on it.

Given two permutation groups $G \leq Sym(\Omega)$ and $H \leq Sym(\Delta)$, the \emph{direct sum} $G\oplus H$ is the permutation group on the \emph{disjoint}
union $\Omega \cup \Delta$ defined as the set of all permutations $(\gg,\hh)$, $\gg\in G, \hh\in H$ such that
$$
\alpha(\gg,\hh) =
\left\{\begin{array}{ll}
\alpha\gg, & \mbox{if } \alpha\in \Omega\\
\alpha\hh, & \mbox{if } \alpha\in \Delta
\end{array}\right.
$$
Thus, in $G\oplus H$, permutations of $G$ and $H$ act independently in a natural way on the disjoint union of the underlying sets.

We introduce the notion of the \emph{subdirect sum} following \cite{GK1} (and the notion of \emph{intransitive product} in \cite{kis1}).
Let $H_1\lhd\; G_1 \leq S_n$ and $H_2\lhd\; G_2\leq S_m$ be permutation groups such that $H_1$ and $H_2$ are normal subgroups of $G_1$ and $G_2$, respectively. Suppose, in addition, that factor groups $G_1/H_1$ and $G_2/H_2$ are (abstractly) isomorphic and $\phi : G_1/H_1 \to G_2/H_2$ is the isomorphism mapping. Then, by
$$
G = G_1[H_1] \oplus_\phi G_2[H_2]
$$
we denote the subgroup of $G_1 \oplus G_2$ consisting of all permutations $(\gg,\hh)$, $\gg\in G_1, \hh\in G_2$, such that $\phi(H_1\gg) = H_2\hh$.
Each such group will be called the \emph{subdirect sum} of $G_1$ and $G_2$.

If $H_1=G_1$ and $H_2=G_2$, then $G = G_1 \oplus G_2$ is the usual direct sum of $G_1$ and $G_2$.
If $H_1$ and $H_2$ are trivial one-element subgroups, $\phi$ is the isomorphism of $G_1$ onto $G_2$, and the sum is called, in such a case, the \emph{parallel sum} of $G_1$ and $G_2$. Then the elements of $G$ are of the form $(\gg,\phi(\gg))$, $\gg\in G_1$, and both the groups act in a parallel manner on their sets \emph{via} isomorphism $\phi$. In this case we use the notation $G=G_1||_\phi G_2$, where $\phi$ is an (abstract) isomorphism between $G_1$ and $G_2$, or simply $G=G_1|| G_2$ if there is no need to refer to $\phi$.
Note that $G_1$ and $G_2$ need to be abstractly isomorphic, but not necessarily permutation isomorphic, and they may act on sets of different cardinalities.

In the special case when, in addition, $G_1=G_2=G$ and $\phi$ is the identity, we write $G^{(2)}$ for $G||_\phi G$. More generally, for $r\geq 2$, by $G^{(r)}$ we denote the permutation group in which the group $G$ acts in the parallel way (\emph{via} the identity isomorphisms) on $r$ disjoint copies of a set $\Omega$. This group is called
the \emph{parallel multiple} of $G$, and its element are denoted $\gg^{(r)}$ with $\gg\in G$. In particular, we admit $r=1$ and put $G^{(1)}=G$.
For example, the cyclic group generated by the permutation
$\gg = (1,2,3)(4,5,6)(7,8,9)$ is permutation isomorphic to the parallel multiple $C_3^{(3)}$.

The main fact established in \cite{kis1} is that every intransitive group has the form of a subdirect sum, and its components can be easily described. Let $G$ be an intransitive group acting on a set $\Omega = \Omega_1 \cup \Omega_2$ in such a way that $\Omega_1$ and $\Omega_2$ are disjoint fixed blocks of $G$. Let $G_1$ and $G_2$ be restrictions of $G$ to the sets $\Omega_1$ and $\Omega_2$, respectively (they are called also \emph{constituents}). Let $H_1 \leq G_1$ and $H_2 \leq G_2$ be the subgroups fixing pointwise $\Omega_2$ and $\Omega_1$, respectively. Then we have

\begin{Theorem} \cite[Theorem 4.1]{kis1} \label{th:s}
If $G$ is a permutation group as described above, then
$H_1$ and $H_2$ are normal subgroups of $G_1$ and $G_2$, respectively,
the factor groups $G_1/H_1$ and $G_2/H_2$ are abstractly isomorphic, and
$$G = G_1[H_1] \oplus_\phi G_2[H_2],$$
where $\phi$ is an isomorphism of the factor groups.
\end{Theorem}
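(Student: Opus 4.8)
The plan is to realize $G$ as a subdirect product inside $G_1\oplus G_2$ by means of the two restriction maps, and then to read off all three assertions from the isomorphism theorems for groups. First I would introduce the restriction homomorphisms $\pi_1:G\to G_1$ and $\pi_2:G\to G_2$, sending $\gg\in G$ to $\gg|_{\Omega_1}$ and $\gg|_{\Omega_2}$ respectively; these are well defined because $\Omega_1$ and $\Omega_2$ are fixed blocks, and surjective by the definition of the constituents $G_1,G_2$. Writing $N_1=\ker\pi_1=\{\gg\in G:\gg|_{\Omega_1}=\mathrm{id}\}$ and $N_2=\ker\pi_2=\{\gg\in G:\gg|_{\Omega_2}=\mathrm{id}\}$, I would note that $N_1\cap N_2=\{1\}$ because $G$ acts faithfully on $\Omega=\Omega_1\cup\Omega_2$; hence $\gg\mapsto(\pi_1(\gg),\pi_2(\gg))$ embeds $G$ into $G_1\oplus G_2$, and from now on I identify $G$ with its image.

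The crucial observation is that $H_1$ and $H_2$ are precisely the images of the opposite kernels. Unwinding the definitions, $H_1=\{\gg|_{\Omega_1}:\gg\in G,\ \gg|_{\Omega_2}=\mathrm{id}\}=\pi_1(N_2)$, and symmetrically $H_2=\pi_2(N_1)$. As the image of the normal subgroup $N_2\lhd G$ under the surjection $\pi_1$, the subgroup $H_1=\pi_1(N_2)$ is normal in $G_1$, and likewise $H_2\lhd G_2$; this settles the normality claim. For the factor groups I would apply the third isomorphism theorem twice: under the identification $G_1\cong G/N_1$ the subgroup $H_1$ corresponds to $N_1N_2/N_1$, whence
$$
G_1/H_1\;\cong\;(G/N_1)\big/(N_1N_2/N_1)\;\cong\;G/(N_1N_2),
$$
and the same computation yields $G_2/H_2\cong G/(N_1N_2)$ (here $N_1N_2\lhd G$ as a product of normal subgroups). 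Composing these two isomorphisms produces the abstract isomorphism $\phi:G_1/H_1\to G_2/H_2$, which, tracing through the identifications, satisfies $\phi(H_1\pi_1(\gg))=H_2\pi_2(\gg)$ for every $\gg\in G$.

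It then remains to verify the identity $G=G_1[H_1]\oplus_\phi G_2[H_2]$, that is, that a pair $(\gg_1,\gg_2)\in G_1\oplus G_2$ lies in $G$ if and only if $\phi(H_1\gg_1)=H_2\gg_2$. The forward direction is immediate from the formula for $\phi$ just displayed, applied to a common lift. For the converse, given $(\gg_1,\gg_2)$ with $\phi(H_1\gg_1)=H_2\gg_2$, I would choose $\gg\in G$ with $\pi_1(\gg)=\gg_1$; then $H_2\pi_2(\gg)=\phi(H_1\gg_1)=H_2\gg_2$, so $\gg_2\,\pi_2(\gg)^{-1}\in H_2=\pi_2(N_1)$. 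Picking $n\in N_1$ with $\pi_2(n)=\gg_2\,\pi_2(\gg)^{-1}$ and replacing $\gg$ by $n\gg$ leaves the first coordinate equal to $\gg_1$ (since $\pi_1(n)=\mathrm{id}$) while correcting the second coordinate to $\gg_2$, exhibiting $(\gg_1,\gg_2)$ as an element of $G$.

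The whole argument is essentially routine diagram chasing; the only points requiring genuine care are the identification $H_i=\pi_i(N_{3-i})$ and the bookkeeping with one-sided (right) cosets under the paper's conventions, where one must ensure that the isomorphism $\phi$ extracted from the two quotients is exactly the one compatible with the simultaneous lifting used in the final step. Beyond this bookkeeping I anticipate no real obstacle.
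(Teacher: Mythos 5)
Your proof is correct and complete: the identifications $H_i=\pi_i(N_{3-i})$, the normality via surjectivity of the restriction maps, the double application of the third isomorphism theorem giving $G_i/H_i\cong G/(N_1N_2)$, and the lifting argument for the converse inclusion are all carried out accurately, including the right-coset bookkeeping. The paper itself offers no proof of this statement (it is quoted from \cite[Theorem~4.1]{kis1}), and your argument is precisely the standard Goursat-style analysis of subdirect products used to establish that cited theorem, so there is nothing to add.
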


In this paper, we apply also other results established in \cite{kis1}, and the reader is referred there in case of any doubts. In particular, recall that alternating groups $A_n$, for $n\geq 3$, \emph{are not} the symmetry groups of any $k$-valued Boolean function, i.e., they do not belong to the class $BGR$. Also, the cyclic groups $C_3=A_3, C_4$ and $C_5$ are not $BGR$.

In turn, $S_n\in BGR(2)$ for any $n>1$. Also, for any $k>1$, if groups $G,H\in BGR(k)$, then $G\oplus H \in BGR(k)$, and for any $r\geq 2$, if $H\in BGR(k)$, then $H^{(r)} \in BGR(k)$ \cite[Theorems~3.1 and 4.3]{kis1}. These results will be treated in our proofs as known, without further comments.

There is some subtlety regarding the parallel powers we have to be aware. A well-known fact is that the automorphism group $Aut(G)$ of a group $G$ may have outer automorphisms that are not given by the conjugation action of an element of $G$. In the case of permutation groups $G\leq Sym(\Omega)$ some outer automorphism may still be given by the conjugation action of an element in $Sym(\Omega)$. This corresponds generally to permuting elements of $\Omega$, and such automorphisms are called \emph{permutation automorphisms}. They form a subgroup of $Aut(G)$, which we denote by $PAut(G)$. Often $PAut(G)=Aut(G)$, but some permutation groups have also other automorphisms, which we will call \emph{nonpermutation} automorphisms.

Now, if $G = H\oplus_\psi H$, for some permutation group $H$, where $\psi\in PAut(G)$, then $G$ is permutation isomorphic to $H^{(2)}$ (i.e., $G=H^{(2)}$, according to our convention). If $\psi$ is a nonpermutation automorphism, then $G \neq H^{(2)}$. (More precisely, we should speak here about isomorphisms induced by automorphisms and make distinction between base sets of components, but we assume that this is contained in the notion of the \emph{disjoint union}, and we will make it explicit only when the need arises). An example is the alternating group $A_6$ that has a nonpermutation automorphism $\psi$ (cf. \cite{CL}). Then, $A_6^{(2)}$ and $A_6 ||_\psi A_6$ are not permutation isomorphic.

\section{Intransitive simple groups}\label{s:is}
Without loss of generality we may assume that permutation groups $G$ we consider have no fixed points.
Indeed, if $G$ has fixed points, then $G= G'\oplus I_m$, where $I_m$ is the trivial group on a $m$-element set and $G'$ has no fixed points. All subgroups $H\leq G$ are of the form $H'\oplus I_m$, where $H'\leq G'$. Hence, it is easy to see that $G$ is simple if and only if $G'$ is simple. Moreover, by the results in \cite{kis1} we know that $I_m \in BGR(2)$ for all $m\geq 1$, and $G\in BGR(k)$ whenever $G'\in BGR(k)$ then.
On the other hand, if $G'\notin BGR$, then $G\notin BGR$. So if we prove the dichotomy that either $G'\in BGR(2)$ or $G'\notin BGR$, the results extend immediately on $G$ with fixed points.
Thus, in the rest of the paper, it is assumed tacitly that permutation groups in question have no fixed points.

Our first result describes the structure of simple permutation groups.

\begin{Proposition}\label{p:simple}
Let $G = H[H']\oplus_\phi K[K']$ be an intransitive permutation group which has no nontrivial normal subgroups. Then the groups $H$ and $K$ are simple and isomorphic as abstract groups, and the groups $H'$ and $K'$ are trivial. In particular, $G$ is the parallel sum $G = H||_\phi K$ for some isomorphism $\phi$.
\end{Proposition}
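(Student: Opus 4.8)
The plan is to exploit the observation that the subgroups $H'$ and $K'$ appearing in the decomposition are, under the identification implicit in the subdirect sum, precisely the kernels of the two constituent actions, and that kernels are automatically normal. First I would make this explicit. Writing elements of $G$ as pairs $(\gg,\hh)$ with $\gg\in H$, $\hh\in K$, and $\phi(H'\gg)=K'\hh$ (as in the definition of the subdirect sum), the pointwise stabilizer in $G$ of the second block $\Omega_2$ is exactly $N_1=\{(\gg,1):\gg\in H'\}$, which by Theorem~\ref{th:s} is isomorphic to $H'$. But this stabilizer is the kernel of the restriction homomorphism $\rho_2\colon G\to Sym(\Omega_2)$, whose image is the constituent $K$; being a kernel, $N_1\lhd G$. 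Symmetrically, $N_2=\{(1,\hh):\hh\in K'\}=\ker\rho_1\lhd G$, where $\rho_1$ is restriction to $\Omega_1$.

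Since $G$ has no nontrivial normal subgroups, each of $N_1$ and $N_2$ is either trivial or all of $G$; the only (mild) obstacle is to exclude the second alternative. If $N_1=G$, then $\rho_2$ is trivial, so $K$ is the trivial group and every point of $\Omega_2$ is fixed by $G$. As $G$ is intransitive we have $\Omega_2\neq\varnothing$, so this contradicts the standing assumption of Section~\ref{s:is} that $G$ has no fixed points. Hence $N_1=1$, and under the identification $N_1\cong H'$ this gives $H'=1$; the same argument with $\rho_1$ yields $K'=1$.

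With $H'$ and $K'$ trivial, Theorem~\ref{th:s} gives that $H/H'=H$ and $K/K'=K$ are abstractly isomorphic, the isomorphism being $\phi$ itself, so $H\cong K$. Moreover the defining condition $\phi(H'\gg)=K'\hh$ collapses to $\phi(\gg)=\hh$, so $G=\{(\gg,\phi(\gg)):\gg\in H\}$ is precisely the parallel sum $H||_\phi K$, and projection onto the first coordinate (the graph of $\phi$) is an abstract isomorphism $G\cong H$. Finally, since $G$ has no nontrivial normal subgroups and $G\cong H\cong K$, the groups $H$ and $K$ inherit this property and are therefore simple, establishing all three assertions.
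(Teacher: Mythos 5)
Your proof is correct and follows essentially the same route as the paper's: the paper likewise observes that $H'\oplus I_m$ (your $N_1$, the pointwise stabilizer of the second orbit block) is normal in $G$, kills it using the no-nontrivial-normal-subgroups hypothesis together with the nontriviality of the constituents, and then reads off that $\phi$ is an isomorphism and that $H\cong K\cong G$ are simple. Your only variation is cosmetic — you justify normality via the kernel of the restriction homomorphism, where the paper invokes $H'\lhd H$ directly — and your explicit exclusion of $N_1=G$ via the no-fixed-points convention is a welcome bit of added care.
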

\begin{proof}
First note that the group $H'\oplus I_m$ is a normal subgroup of $G$ (since $H'\lhd H$). As, by our general assumption, $K$ is nontrivial, we infer that $H'$ is trivial. Similarly, we observe that $K'$ is trivial. It follows that $\phi$ is an isomorphism between $H$ and $K$, and $G$ is isomorphic to both $H$ and $K$. Therefore, $H$ and $K$ are simple.
\end{proof}

The proposition above means that intransitive simple permutation groups with nontrivial orbits have always the form of parallel sums. Some remarks are needed to make a proper use of this result.

Note that (using the inverse isomorphism $\phi^{-1}$) we see easily that $G_1||_\phi G_2$ and $G_2 ||_{\phi^{-1}} G_1$ are permutation isomorphic, and since we treat permutation isomorphic groups as identical, the operation of the parallel sum may be considered to be commutative. Further, decomposing each summand step by step we can get a decomposition into transitive components. In particular,
in $G=(G_1 ||_\phi G_2)||_\psi G_3$ all the involved groups must be abstractly isomorphic, and $G$ is permutation isomorphic with $G_1 ||_{\phi'} (G_2||_{\psi'} G_3)$, where isomorphisms $\phi'$ and $\psi'$ are suitably determined by $\phi$ and $\psi$. Thus we may also consider this operation to be associative (up to permutation isomorphism).

So, generally, a simple intransitive permutation group is a parallel sum of two or more transitive components that are all abstractly isomorphic, and the action on the union of orbits is given by a system of suitable isomorphisms between components. (In case, when a group has two different actions on the set of the same cardinality, various systems of isomorphism may lead to different permutation groups; so the pointing out only transitive components may not define the parallel sum uniquely. For example, projective symplectic group $PSp(4,3)$ has two nonequivalent actions on the set of cardinality $n=40$.)

The following lemma gives some sufficient conditions for a parallel sum to be the symmetry group of a Boolean function.

\begin{Lemma}\label{l:reg}
Let $G= H ||_\phi K$ be a parallel sum of two groups. If $H\in BGR(2)$ and it has a regular set $y$, then $G\in BGR(2)^{\#}$.
\end{Lemma}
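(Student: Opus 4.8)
\emph{Proof plan.} Write $H \le Sym(\Omega)$ and $K \le Sym(\Delta)$ for the two summands, so that $G$ consists of the permutations $(\gg,\phi(\gg))$, $\gg\in H$, acting on the disjoint union $\Omega \cup \Delta$. Let $R$ be a relation on $\Omega$ with $\mathcal{G}(R)=H$, and let $y\subseteq\Omega$ be the given regular set of $H$. The plan is to build a single relation $S$ on $\Omega\cup\Delta$ that uses $R$ to pin down the $\Omega$-action as an element of $H$, and uses the regularity of $y$ to transport that action \emph{via} $\phi$ to $\Delta$. Let $\Delta_1,\dots,\Delta_r$ be the orbits of $K$ on $\Delta$, pick $\delta_i\in\Delta_i$, and set $P_i=\Delta_1\cup\cdots\cup\Delta_{i-1}$ (a $K$-invariant set) and
$$
c_i = y \cup P_i \cup \{\delta_i\}.
$$
Since $P_i$ is a union of $K$-orbits it is fixed setwise by every $\phi(\gg)$, so the $G$-orbit of $c_i$ is $c_i^{G}=\{\, \gg y \cup P_i \cup \{\delta_i\phi(\gg)\} : \gg\in H\,\}$, in which the $\Omega$-part $\gg y$ and the single moving point $\delta_i\phi(\gg)\in\Delta_i$ both encode $\gg$, while $P_i$ stays constant. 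Crucially, the cardinalities $|c_i|=|y|+|P_i|+1$ are \emph{strictly increasing} in $i$, hence pairwise distinct. I then take $S = R \cup \bigcup_{i=1}^{r} c_i^{G}$.

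The inclusion $G \subseteq \mathcal{G}(S)$ is immediate: $G$ acts as $H$ on $\Omega$ and so preserves $R$, and it preserves each orbit $c_i^{G}$ by definition. For the reverse inclusion, take $u\in\mathcal{G}(S)$. First one arranges, by the conventions of Section~\ref{s:pre} (including all $\Omega$-singletons in $R$ and no $\Delta$-singleton in $S$, or simply adjoining $\Omega$ to $S$), that $u$ preserves $\Omega$ and $\Delta$ setwise; this is the routine bookkeeping step. Then $u|_{\Omega}$ preserves $R$, so $u|_{\Omega}=\gg$ for some $\gg\in H$, and the sets meeting $\Delta$ are exactly the $c_i^{G}$, which $u$ must permute; as their cardinalities are distinct, $u$ preserves each $c_i^{G}$ separately.

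The heart of the argument is to show that $u$ acts on $\Delta$ exactly as $\phi(\gg)$. Put $w=u\,(\gg,\phi(\gg))^{-1}$. Then $w\in\mathcal{G}(S)$ acts as the identity on $\Omega$ and as some permutation $\sigma$ on $\Delta$, and it suffices to prove $\sigma=\mathrm{id}$. I do this by induction on $i$, showing that $\sigma$ fixes $\Delta_i$ pointwise. Assuming $\sigma$ fixes $P_i=\Delta_1\cup\cdots\cup\Delta_{i-1}$ pointwise, apply $w$ to an arbitrary member $z=\gg y\cup P_i\cup\{\delta_i\phi(\gg)\}$ of $c_i^{G}$: since $w$ is the identity on $\Omega$ and fixes $P_i$ pointwise, $zw=\gg y\cup P_i\cup\{\delta_i\phi(\gg)\sigma\}$, and this again lies in $c_i^{G}$ (it has cardinality $|c_i|$ and meets $\Delta$, so it is in no other orbit). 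Its $\Omega$-part $\gg y$ determines the group element $\gg$ by regularity of $y$, whence its unique preimage description forces $\delta_i\phi(\gg)\sigma=\delta_i\phi(\gg)$. Letting $\gg$ range over $H$, the points $\delta_i\phi(\gg)$ sweep out all of $\Delta_i=\delta_i^{K}$, so $\sigma$ fixes $\Delta_i$ pointwise, completing the induction. Hence $\sigma=\mathrm{id}$, so $w=\mathrm{id}$ and $u=(\gg,\phi(\gg))\in G$, giving $G=\mathcal{G}(S)\in BGR(2)$. The \textbf{main obstacle} is precisely the intransitivity of $K$: if all orbits $c_i^G$ had equal size, a rogue $w$ could permute ``parallel'' $K$-orbits; the staircase padding by the invariant sets $P_i$ defeats this by making the orbits distinguishable by cardinality, after which the per-orbit induction goes through.

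It remains to upgrade $G\in BGR(2)$ to $G\in BGR(2)^{\#}$. For regular sets I use the key observation that every set of the form $y\cup D$ with $D\subseteq\Delta$ is regular in $G$: if $(\gg,\phi(\gg))$ fixes $y\cup D$ setwise, its $\Omega$-part forces $\gg y=y$, hence $\gg=1$ by regularity of $y$. This yields regular sets of all cardinalities between $|y|$ and $|y|+|\Delta|$, so in particular $D(G)=2$. Finally, to conclude that every subgroup of $G$ lies in $BGR(2)$, I invoke Lemma~\ref{l:sies} applied to $G=\mathcal{G}(S)$: choosing among the abundant supply of regular sets $y\cup D$ one whose cardinality lies outside $ar(S)$ (the consecutive range of available sizes gives enough room to avoid the finite set $ar(S)$, and if necessary one passes to a complement), the hypothesis $|y\cup D|\notin ar(S)$ is met and Lemma~\ref{l:sies} delivers the claim. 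Thus $G\in BGR(2)^{\#}$, as required.
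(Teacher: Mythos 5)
Your core construction is sound and is a genuine variant of the paper's argument. The paper forces the splitting $\Omega\cup\Delta$ by the relation $R_1=\{x\cup\Delta : x\subseteq\Omega,\ |x|=|\Omega|-1\}$, encodes the $\Delta$-action by the orbits of $y\cup\Delta_i$ for a \emph{chain} of initial segments $\Delta_1\subset\cdots\subset\Delta_{m-1}$ of an arbitrary enumeration of $\Delta$ (no reference to the orbit structure of $K$), and closes by appealing to Theorem~\ref{th:s} plus a finiteness/cardinality comparison; you instead mark one point per $K$-orbit, separate the orbits $c_i^G$ by the staircase padding $P_i$, and prove directly by induction that $w$ is the identity, which cleanly bypasses Theorem~\ref{th:s}. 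The heart of your proof --- distinct cardinalities $|c_i|$, recovery of $\gg$ from the $\Omega$-part by regularity of $y$, and sweeping $\Delta_i=\delta_i^{K}$ as $\gg$ ranges over $H$ --- is correct (note that $\delta_i\phi(\gg)\sigma\notin P_i$ is automatic from injectivity of $w$, so the orbit identification goes through).

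However, two steps are genuinely under-justified. First, your ``routine bookkeeping'': adjoining all $\Omega$-singletons to $S$ can strictly \emph{enlarge} the group cut out on $\Omega$ whenever the chosen defining relation $R$ essentially uses a proper subfamily of singletons (the cardinality-one class of $S$ then becomes the full, $Sym(\Omega)$-invariant singleton family, so the trace constraint degrades to $\G(R_{\geq 2})\supseteq H$, possibly strictly, and then ``$u|_\Omega=\gg\in H$'' fails and $w$ is undefined); and merely adjoining $\Omega$ does not force $u(\Omega)=\Omega$ if some $|c_i|=|\Omega|$. The paper's $R_1$ repairs both defects at once: its members have strictly largest cardinality (so they are preserved as a class, forcing the splitting) and they all meet $\Delta$, so the sets of $S$ lying inside $\Omega$ are \emph{exactly} those of $R$ and no information is lost. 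Second, and more substantively, your upgrade to $BGR(2)^{\#}$ rests on the claim that some size in $\{|y|,\ldots,|y|+|\Delta|\}$ (or in the complemented window) avoids $ar(S)$; finiteness of $ar(S)$ buys nothing here, since the available sizes live in the same finite range, and $ar(R)$ may well cover both windows (e.g.\ $|\Omega|$ large, $|y|+|\Delta|\leq|\Omega|$, $ar(R)$ an interval around $|y|$). Lemma~\ref{l:sies} really requires only the orbit condition $y^{G}\cap R=\varnothing$, and that is how the paper concludes: it takes the regular set $y\cup\Delta$ (with $|y|\neq|\Omega|-1$ after complementing), deletes the sets of cardinality $|y|+|\Delta|$ from the relation, observes that $R_1$ survives so the reduced group still preserves the splitting, and hence the orbit of $y\cup\Delta$ consists of sets containing $\Delta$, none of which lie in $R$. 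The same patch works in your setup --- after ordering the $K$-orbits so that a non-singleton orbit comes last, no $c_i$ contains $\Delta$ --- but as written your final paragraph does not prove the hypothesis of Lemma~\ref{l:sies}.
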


\begin{proof} Assume that $H\leq Sym(\Omega)$ and $K \leq Sym(\Delta)$ with $\Delta= \{\alpha_1,\alpha_2,\ldots, \alpha_m\}$. Let $R_0$ be a subset of $P(\Omega)$ such that $\G(R_0)=H$, and denote $\Delta_i = \{\alpha_1,\alpha_2,\ldots, \alpha_i\}$.
We define two subsets of $P(\Omega\cup \Delta)$.
\begin{align*}
R_1&=\{x\cup \Delta : \; x\subseteq \Omega,\; |x|= |\Omega|-1\},\\
R_2&=\{ (y\cup \Delta_i)\gg : \; 0<i<m, \; \gg\in G \}.
\end{align*}

Now, we put $R=R_0\cup R_1\cup R_2$. We show that $G=\G(R)$. First, it is easy to see that each relation $R_0,R_1,R_2$ is preserved by permutations in $G$, and therefore $G \subseteq \G(R)$. We prove the opposite inclusion.

Note that sets in $R_0$ are contained in $\Omega$, while each set in $R_1$ or $R_2$ has a nonempty intersection with $\Delta$. Further, each set in $R_1$ contains $\Delta$, which is not the case for any set in $R_2$. It follows that the relations $R_0,R_1,R_2$ are mutually disjoint. Moreover, since the sets in $R_1$ have all cardinality $|\Omega|+|\Delta|-1$, larger than the sets in $R_0$ and $R_2$, it follows that any permutation $\gg$ preserving $R$, preserves $R_1$ itself. Since the elements of $R_1$ are the complements of one-element subsets of $\Omega$, this means that each $\gg\in \G(R)$ preserves the partition into $\Omega$ and $\Delta$.
In particular, each such permutation may be presented as $\gg=\hh\kk$ with $\hh$ and $\kk$ acting on the sets $\Omega$ and $\Delta$, respectively.

If so, $\gg$ preserves also $R_0$ and $R_1$, individually.
In particular, the action of $\G(R)$ on $\Omega$ is contained in the action of $H$, i.e., $\hh\in H$. This proves that $\G(R) \subseteq H \oplus K'$, for some $K'\leq Sym(\Delta)$.
In order to use Theorem~\ref{th:s}, we need to show that
the pointwise stabilizers of $\Omega$ and $\Delta$ in $\G(R)$ are both trivial.

For the first, suppose that $\gg=\hh\kk\in \G(R)$ fixes each point of $\Omega$. This means that $h$ is the identity, $\hh=1$. Thus, for each $i<m$, we have that $(y\cup \Delta_i)\gg = y \cup \Delta_i\kk$. This must belong to $R_2$ (since $R_2$ is preserved by $\gg$), which means that $y \cup \Delta_i\kk = (y\cup \Delta_i)\gg_i$, for some $\gg_i\in G$. The only $\gg_i\in G$ fixing $y$ is $\gg=1$. Therefore, $\Delta_i\kk = \Delta_i$ for all $i<m$, which means that $\kk=1$, as required.

Suppose, in turn, that $\gg=\hh\kk\in \G(R)$ fixes each point $\kappa_i\in \Delta$. Then, for each $i<m$, $(y\cup \Delta_i)\gg = y\hh \cup \Delta_i$, and since $R_2$ is preserved by $\gg$, $y\hh \cup \Delta_i=(y\cup \Delta_i)\gg_i$ for some $\gg_i\in G$. Now,
$(y\cup \Delta_i)\gg_i = y\hh_i \cup \Delta_i\kk_i$ for some $\hh_i\in H$ and $\kk_i \in K$. Thus, for each $i<m$,
$$
y\hh \cup \Delta_i =
y\hh_i \cup \Delta_i\kk_i
$$
Since $\gg_i \in G=H||_\phi K$, $\kk_i$ is uniquely determined by $\hh_i$, and
since $y$ is regular in $H$, $\gg_i=\hh$, for each $i<m$, and $\kk_i = \kk$ does not depend on $i$. Therefore, for each $i<m$, $\Delta_i = \Delta_i\kk$ for some $\kk\in K$. In view of the definition of $\Delta_i$, this implies that $\kk=\kk_i=1$. Consequently (since $\gg_i \in H||_\phi K$), $\hh_i=\hh=1$, as required. By Theorem~\ref{th:s}, $\G(R) = H||_\psi K'$ for some $K'\leq Sym(\Delta)$ and some isomorphism $\psi$ between $H$ and $K'$.

As we have proved that $G \subseteq \G(R)$, we have $H||_\phi K \subseteq H||_\psi K'$, which in view of the finiteness of the groups involved (and the fact that $\psi$ and $\phi$ are bijections) implies $K'=K$, proving that $G\in BGR(2)$.

It remains to prove that $G$ has a suitable regular set, in order to apply Lemma~\ref{l:sies}. First, observe that, since $y$ is regular in $H$, the complement $\Omega\setminus y$ is also regular in $H$. It follows that we may assume that $|y|\neq |\Omega|-1$, taking the complement of $y$ in $\Omega$, if necessary. In turn, since $y$ is regular in $H$, the set $y\cup \Delta$ is regular in $G=H||_\phi K$.
Now, the relation $R'$ obtained from $R$ by deleting the sets of cardinality $|y|+|\Delta|$, contains $R_1$, and therefore $G'=\G(R')$ preserves the partition into $\Omega$ and $\Delta$. It follows that the orbit $(y\cup \Delta)^{G'}$ is disjoint with $R$. Applying Lemma~\ref{l:sies} completes the proof.
\end{proof}

Below we consider the parallel multiple $A_n^{(r)}$ of the alternating group $A_n$. As we will see it belongs to $BGR(2)$ if and only if $r$ is sufficiently large with regard to $n$.

\begin{Lemma}\label{l:An}
For every $n\geq 3$ and $r\geq 1$, the parallel multiple $A_n^{(r)} \in BGR(2)$ if and only if $2^r\geq n$. Moreover, if $2^r\geq n$, then $A_n^{(r)} \in BGR(2)^{\#}$; otherwise, $A_n^{(r)} \notin BGR$.
\end{Lemma}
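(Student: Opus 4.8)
The plan is to prove the two directions separately, and the harder, more interesting direction is the ``only if'': showing that $2^r < n$ forces $A_n^{(r)} \notin BGR$. Let me set up the underlying combinatorial picture. We have $A_n$ acting in the parallel way (via the identity) on $r$ disjoint copies $\Omega_1,\ldots,\Omega_r$ of an $n$-element set, so a point of the ground set is a pair $(i,j)$ with $1\le i \le r$ indexing the copy and $j$ a point of $\{1,\ldots,n\}$. An element $g\in A_n$ acts as $(i,j)\mapsto (i, jg)$. The key object to track is how $A_n$ acts on subsets: a subset $x$ of the whole ground set is determined by the $r$-tuple $(x_1,\ldots,x_r)$ of its intersections with each copy, equivalently by the function $c\colon \{1,\ldots,n\}\to \{0,1\}^r$ sending $j$ to the vector recording in which copies $j$ appears. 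So subsets of the ground set correspond bijectively to \emph{colourings} of $\{1,\ldots,n\}$ by the $2^r$ colours in $\{0,1\}^r$, and $A_n$ acts on these colourings by permuting the $n$ points (it does \emph{not} permute colours). A colouring is regular (its stabilizer is trivial) exactly when the colour-class partition of $\{1,\ldots,n\}$ is a regular set for $A_n$ in the distinguishing-number sense.

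\medskip

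For the ``if'' direction I would invoke the reduction machinery already in place. When $2^r \ge n$ there are at least $n$ available colours, so one can colour the $n$ points with pairwise distinct colours; the stabilizer of such a colouring in $A_n$ is trivial, giving a regular set. More structurally, I would aim to realize $A_n^{(r)}$ as a parallel sum built from a transitive $BGR(2)$-piece carrying a regular set and then apply Lemma~\ref{l:reg}: since $A_n$ itself is not in $BGR$, the trick (exactly as in the $PSL(2,8)$ case of Proposition~\ref{p:ti}) is to use a parallel multiple with enough copies that the combined colouring data pins down each permutation uniquely, while the symmetry of $A_n$ (as opposed to $S_n$) is enforced by choosing a defining relation whose arity detects \emph{even} versus odd permutations. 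Concretely, with $2^r \ge n$ one has a regular colouring, and a relation consisting of the orbit of that regular colouring together with auxiliary sets forcing the partition into copies will cut $Sym$ down to exactly $A_n^{(r)}$; that it lands in $A_n$ rather than $S_n$ comes from choosing the distinguishing data so that an odd permutation necessarily moves the orbit. This yields $A_n^{(r)}\in BGR(2)^{\#}$.

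\medskip

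The ``only if'' direction is the crux. Suppose $2^r < n$ and, for contradiction, that $A_n^{(r)} = \G(\f)$ for some $k$-valued Boolean function $\f$. The point is that with fewer than $n$ colours available, \emph{every} colouring of $\{1,\ldots,n\}$ has a repeated colour, so by the pigeonhole principle every subset $x$ of the ground set has a nontrivial stabilizer in $A_n$ — indeed one can always find a $3$-cycle fixing $x$ (a $3$-cycle on three points sharing a common colour-vector fixes the colouring, and for $n$ large relative to $2^r$ such a monochromatic triple exists). The plan is to show this forces $\G(\f)$ to be strictly larger than $A_n^{(r)}$: since no single subset can distinguish $A_n^{(r)}$ from a larger group, I would argue that any permutation preserving all $A_n^{(r)}$-orbits on colourings must itself preserve each orbit, and then identify a permutation outside $A_n^{(r)}$ doing so. The natural candidate is to exploit that when $2^r<n$ the induced action of $A_n^{(r)}$ on colourings coincides with that of a strictly larger group — one shows that no relation can separate $A_n^{(r)}$ from, for instance, a group interchanging copies or enlarging a constituent, because every colour-class has a repeat and hence the orbit structure on $P(\Omega)$ is too coarse. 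This is essentially a counting/orbit-equivalence argument: $A_n^{(r)}$ and some overgroup have identical orbits on $P(\Omega)$, so they have the same $BGR$-closure, whence $A_n^{(r)}$, not being orbit-closed, lies outside $BGR$ entirely.

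\medskip

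The main obstacle will be making the last step rigorous: precisely identifying the overgroup of $A_n^{(r)}$ that shares all its subset-orbits when $2^r<n$, and verifying orbit equivalence. The clean way, I expect, is to reduce to the transitive constituent and argue by induction on $r$, using the subdirect-sum/parallel-sum description (Proposition~\ref{p:simple} and Theorem~\ref{th:s}) together with the known fact that $A_n\notin BGR$; the inductive threshold $2^r \ge n$ should emerge from asking exactly how many parallel copies are needed so that distinct points can be assigned distinct colour-vectors, i.e. so that a regular set first appears. Handling small $n$ (and the boundary $2^r = n$) may require separate checks, possibly computational, in the spirit of the GAP verifications used elsewhere in the paper.
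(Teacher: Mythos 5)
Your ``if'' direction is, in outline, the same as the paper's argument: your regular colouring with pairwise distinct colour vectors is exactly the paper's set $y$ defined by binary representations of $1,\dots,n$; your ``auxiliary sets forcing the partition into copies'' play the role of the paper's relation $Q$ (singletons on the first copy plus pairs linking consecutive copies), which forces every preserving permutation to have the parallel form $g^{(r)}$ and satisfies $\G(Q)=S_n^{(r)}$; and the mechanism by which parity is detected --- an odd $g^{(r)}$ necessarily moves the $A_n^{(r)}$-orbit of a set that is regular in $S_n^{(r)}$ --- is precisely Lemma~\ref{l:sies} applied with $H=S_n^{(r)}$ (with the minor bookkeeping of replacing $y$ by its complement so that $|y|\notin ar(Q)=\{1,2\}$). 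So that half is sound, if left at the level of a plan.

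The genuine gap is in the ``only if'' direction, which you correctly call the crux and then do not close. Concretely: your subsidiary claim that $2^r<n$ forces every subset to have a nontrivial stabilizer in $A_n^{(r)}$, witnessed by a $3$-cycle on a monochromatic triple, is false at the boundary --- a monochromatic triple needs $n>2\cdot 2^r$, and for $n=2^r+1$ there is a colouring with a single monochromatic pair and all other colours distinct, whose stabilizer in $A_n^{(r)}$ is trivial; indeed $A_n^{(r)}$ \emph{does} have regular sets whenever $2^r\ge n-1$ (cf.\ Theorem~\ref{th:reg}), so no argument based on nontrivial $A_n$-stabilizers can work on the whole range $2^r<n$. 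What is needed, and what you explicitly defer as ``the main obstacle'' (identifying the overgroup and verifying orbit equivalence), is a short direct computation with the overgroup $S_n^{(r)}$: for any subset $y$ and any odd $g$, the pigeonhole principle ($2^r<n$) gives two points $s,t$ with equal colour vectors, so the parallel transposition $w=u^{(r)}$ with $u=(s,t)$ fixes $y$ setwise; then $y\,g^{(r)}=y\,(w g^{(r)})$ with $wg^{(r)}\in A_n^{(r)}$, so any $k$-valued $\f$ invariant under $A_n^{(r)}$ satisfies $\f(y\,g^{(r)})=\f(y)$. Hence every $\G(\f)\supseteq A_n^{(r)}$ in fact contains $S_n^{(r)}$, and $A_n^{(r)}\notin BGR(k)$ for every $k$. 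Your proposed substitute --- an induction on $r$ via the subdirect-sum decomposition --- is both undeveloped and unnecessary; as stated, the only-if direction of your proposal does not constitute a proof.
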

\begin{proof}
For $r=1$ the claim is that $A_n \notin BGR$, which is the case. So we assume that $r\geq 2$.
First we consider the parallel multiple of $S_n$ and construct a relation $Q$ such that $S_n^{(r)} = \G(Q)$. (By \cite{kis1}, we know that $S_n^{(r)}=BGR(2)$ for any $r\geq 2$, but we need an explicit construction of the set $Q$ to use in the further part of the proof).

The group $S_n^{(r)}$ acts on a set $U$ consisting of $r$ disjoint copies of $\Omega=\{1,2,\ldots,n\}$, and for this proof, we may assume $U = \{(i,j) : 1\leq i \leq r, 1\leq j \leq n \}$, meaning that, for a fixed $i$, the elements $(i,j)$ form the $i$-th copy of $\Omega$.
We define a relation $Q\subseteq P(U)$ such that $S_n^{(r)} = \G(Q)$. We put
$$
Q = \{\{(1,j)\} : 1\leq j\leq n\} \; \cup \; \{\{(i,j),(i+1,j)\} : 1\leq i < r, 1\leq j \leq n\}.
$$

The first set of the union above
contains single elements of the first copy of $\Omega$, while the second set contains suitable unordered pairs.

This guarantees that every permutation in $\G(Q)$ is of the form $\gg^{(r)}$. Indeed, we need to show that for any $i,j$ and any $\hh\in \G(Q)$, the image $(i,j)\hh = (i,j\gg)$ for some permutation $\gg$ of $\Omega$ that does not depend on $i$. Looking for the part containing singletons of $Q$ we infer that $(1,j) = (1,j\gg)$. Next, for the pairs, we have $\{(1,j),(2,j)\}\hh = \{(1,j\gg),(i',j')\}$, and since the only pairs of this form in $R$ are those with $i'=2$ and $j'=j\gg$, we have
$\{(1,j),(2,j)\}\hh = \{(1,j\gg),(2,j\gg)\}$, and consequently, $(2,j)\hh = (2,j\gg)$, as required. The same argument works for further pairs
$\{(i,j),(i+1,j)\}$ with $i=2,\ldots,n-1$, proving the claim.
Thus, we have shown that $\G(\f)\leq S_n^{(r)}$.
On the other hand, obviously, every permutation of the form $\gg^{(r)}$ preserves $Q$, so $\G(Q)= S_n^{(r)}$.

To prove the ``if part'' of the theorem, assume that $2^r \geq n$. We define a regular set $y$ in $S_n^{(r)}$ such that $y\notin ar(Q)$, and apply Lemma~\ref{l:sies}. To this end, let
$j(i)$ denote the $i$-th bit (from the right) in the binary notation of the number $j$. Then we put $(i,j) \in y$ if and only if $j(i)=1$. Thus, the sequences $(1,j),(2,j),\ldots,(r,j)$ are binary representations of different integers provided $2^r \geq n$.

To see that $y$ is regular, note that applying any permutation $\hh\in S_n^{(r)}$ to $y$ corresponds to changing positions of the integers
$(1,j),(2,j),\ldots,(r,j)$, in the parallel way for all $j$. Therefore, all the images $y\hh$ are different.

Since $ar(Q) = \{1,2\}$ we see that $|y| \notin ar(Q)$, provided $|y|>2$. If this is not the case, we replace $y$ by its complement in $U$, which is also a regular set, and satisfies the required condition on the cardinality. Thus, by Lemma~\ref{l:sies}, we infer that every subgroup of $S_n^{(r)}$ belongs to $BGR(2)^{\#}$, as required.

For the ``only if'' part assume that for some $r$ with $2^r < n$ there exists a $k$-valued Boolean function $\f$ such that $\G(\f) \supseteq A_n^{(r)}$. We show that $\G(\f)= S_n^{(r)}$.

Let $y$ be a subset of $U$ with $\f(y)=d$ for some $d<k$ and $\hh$ be an arbitrary permutation in $S_n^{(r)}$. It is enough to show that $\f(y\hh)=d$. If $\hh = \gg^{(r)} \in A_n^{(r)}$, then we are done by the assumption. So assume, in addition, that $\gg \in S_n\setminus A_n$.

As before, consider the sequence $(1,j), (2,j), \dots, (r,j)$ as one determining the binary notation of an $r$-bit number $m=m_j$ according to the condition: $m(i)=1$ if and only if $(i,j) \in y$. Then, since $2^r < n$, there are two positions $s,t < n$ such that $m_s = m_t$, which means that the corresponding sequences for $j=s,t$ are identical. Consequently, there exists a transposition $\uu \in S_n$ such that for $\ww=\uu^{(r)}$ we have $y\ww\gg = y\gg$. Now, $\ww\gg\in A_n^{(r)}$, which means that $\f(y\gg)=d$, as required.

It follows that $A_n^{(r)}\notin BGR(k)$ for any $k\geq 2$, completing the proof.
\end{proof}

In general, if $G$ is a parallel sum of permutation isomorphic components, it needs not to be a parallel multiple. This is so, since permutation groups may have nonpermutation automorphisms. For alternating groups we have an interesting exception.

\begin{Lemma}\label{l:Ann}
Suppose that $G$ is the parallel sum of components permutation isomorphic to a fixed alternating group $A_n$, and $G\neq A_n^{(r)}$ for any $r\geq 1$. Then, $n=6$, $G= A_6^{(r)}||_\psi A_6^{(s)}$ for some $r,s\geq 1$ and $G\in BGR(2)$. Moreover, $G\in BGR(2)^{\#}$, with the exception of $G= A_6||_\psi A_6$ which fails to have a regular set.
\end{Lemma}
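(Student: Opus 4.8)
The plan is to determine the structure of $G$ first and then feed it into Lemma~\ref{l:reg}. Recall from Section~\ref{s:ss} that, up to permutation isomorphism, a parallel sum of components all permutation isomorphic to the natural $A_n$ is determined by its system of gluing isomorphisms \emph{modulo relabellings of the individual copies}, and these relabellings realise exactly the automorphisms induced by conjugation in $S_n$, i.e.\ $PAut(A_n)=S_n$. Since $Aut(A_n)=S_n=PAut(A_n)$ for every $n\neq 6$, in that case each gluing can be absorbed into a relabelling and $G=A_n^{(r)}$ for some $r$, contrary to hypothesis; hence $n=6$. For $n=6$ we have $[Aut(A_6):PAut(A_6)]=2$; fixing a representative $\psi$ of the nontrivial coset and normalising copy by copy relative to the first one, each component is glued either through a permutation automorphism (the $r\geq 1$ copies of \emph{type A}, including the first) or through $\psi$ (the $s$ copies of \emph{type B}). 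As $PAut(A_6)\lhd Aut(A_6)$, the mutual gluings within each type lie in $PAut(A_6)$, so type A assembles into $A_6^{(r)}$, type B into $A_6^{(s)}$, and the two blocks are glued by $\psi$; that is, $G=A_6^{(r)}||_\psi A_6^{(s)}$. Here $s\geq1$, since $s=0$ would give $G=A_6^{(r)}$.

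I would then clear all but two base cases using Lemma~\ref{l:reg}. If $r\geq3$ (or symmetrically $s\geq3$), then by Lemma~\ref{l:An} the group $A_6^{(r)}$ lies in $BGR(2)$ and has a regular set, so writing $G=A_6^{(r)}||_\phi A_6^{(s)}$ and applying Lemma~\ref{l:reg} gives $G\in BGR(2)^{\#}$. The case $(r,s)=(2,2)$ I would reduce to $(2,1)$: grouping two type-A copies with one type-B copy exhibits $G=\bigl(A_6^{(2)}||_\psi A_6\bigr)||_\phi A_6$, so once $A_6^{(2)}||_\psi A_6\in BGR(2)^{\#}$ is established, Lemma~\ref{l:reg} settles $(2,2)$ as well. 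As $(1,2)$ is permutation isomorphic to $(2,1)$ (the parallel sum is commutative and $\psi^{-1}$ again represents the nontrivial coset), only the base groups $A_6||_\psi A_6$ and $A_6^{(2)}||_\psi A_6$ remain.

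For the regular sets I would work inside $A_6$, writing the setwise stabiliser of a set $x$ as $\mathrm{stab}(x)$. A regular set of $A_6^{(2)}||_\psi A_6$ amounts to choosing, in the three copies, subsets $y_1,y_2$ (type A) and $z$ (type B) with $\mathrm{stab}(y_1)\cap\mathrm{stab}(y_2)\cap\psi^{-1}(\mathrm{stab}(z))=\{1\}$; picking $y_1,y_2$ so that $\mathrm{stab}(y_1)\cap\mathrm{stab}(y_2)$ has order $2$, say generated by an involution $t$, and then $z$ with $\psi(t)z\neq z$, produces one. For $A_6||_\psi A_6$ a regular set would require $\mathrm{stab}(y)\cap\psi^{-1}(\mathrm{stab}(z))=\{1\}$; since every subset of $\Omega$ has a setwise stabiliser of order at least $18$ in $A_6$, the standard product bound $|\mathrm{stab}(y)|\cdot|\psi^{-1}(\mathrm{stab}(z))|/|A_6|>1$ forces a nontrivial intersection in all cases except $|y|=|z|=3$, and this last configuration is excluded by inspecting the order-$18$ subgroups of $A_6$ (a short GAP check). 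Thus $A_6||_\psi A_6$ has no regular set, as claimed.

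It remains to place both base groups in $BGR(2)$, and this is the step I expect to be the main obstacle, because the twist by the nonpermutation automorphism $\psi$ is exactly what makes it possible: the untwisted groups $A_6$ and $A_6^{(2)}$ are not in $BGR$ at all (Lemma~\ref{l:An}), and the argument defeating them --- producing a correcting $S_6$-transposition that acts identically on all copies --- is unavailable once the copies are glued by $\psi$, which is induced by no permutation. Concretely, I would exhibit a defining relation for each base group as a union of $G$-orbits on $P(\Omega\cup\Delta)$, necessarily including orbits of sets meeting both $\Omega$ and $\Delta$ so as to encode the $\psi$-identification of the two constituents, and verify computationally in GAP that $\G$ of this union equals $G$ (in the style of the transitive cases in Proposition~\ref{p:ti}). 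For $A_6^{(2)}||_\psi A_6$, arranging the regular set above to have cardinality outside the arity of the relation and invoking Lemma~\ref{l:sies} then upgrades membership to $BGR(2)^{\#}$, which through the reductions of the second paragraph completes all cases except $A_6||_\psi A_6$, the single group that lies in $BGR(2)$ but has no regular set.
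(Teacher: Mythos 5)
Your proposal is correct and follows essentially the same route as the paper: reduce via $[Aut(A_6):PAut(A_6)]=2$ to $G=A_6^{(r)}||_\psi A_6^{(s)}$, handle the two base cases $A_6||_\psi A_6$ and $A_6^{(2)}||_\psi A_6$ by explicit GAP-verified defining relations built from $G$-orbits of sets meeting both constituents, and bootstrap to all $(r,s)$ via Lemma~\ref{l:reg} (with Lemmas~\ref{l:An} and~\ref{l:sies}). The only differences are refinements, not a different method: the paper records the concrete relations (e.g.\ $R=\{1,2,3,1'\}^G\cup\{\Omega_1\}$, then $R'=R\cup\{\Omega\}\cup\{\{1',1''\},\ldots,\{6',6''\}\}$) where you defer to a GAP search, and it verifies the nonexistence of a regular set in $A_6||_\psi A_6$ purely computationally, whereas your order-$18$ stabilizer bound $|\mathrm{stab}(y)|\cdot|\psi^{-1}(\mathrm{stab}(z))|>|A_6|$ neatly reduces that check to the single $(3,3)$ configuration.
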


\begin{proof}
It is well known \cite{CL} that the only alternating group $A_n$ having any permutation automorphism is $A_6$ and that the index $[Aut(A_6) : PAut(A_6)]=2$, which means that up to permutation automorphisms there is only one nonpermutation automorphism in $A_6$. Hence, $G= A_6^{(r)}$ (which is excluded by our assumption) or $ G=A_6^{(r)}||_\psi A_6^{(s)}$.
We prove that the latter belongs to $BGR(2)$.

First, consider $G= A_6 ||_\psi A_6$.
Using GAP, we find any permutation automorphisms $\psi$ of $A_6$ given by the images of generators of $A_6$:
$$\big( (2,3)(4,5) \big)\psi = (2,5)(3,4), \hbox{\rm\ and \ } \big( (1,2,3,4)(5,6)\big)\psi = (1,2,3,4)(5,6). $$
(Note that this mapping cannot be obtained by permuting points). This may be used to form the group $G=A_6||_\psi A_6$ as one generated by
$$g= (2,3)(4,5)(2',5')(13',14')
\hbox{\rm\ and }h=(1,2,3,4)(5,6)(1',2',3',4')(5',6')$$
on the set $\Omega=\Omega_1\cup\Omega_2$ with $\Omega_1 =\{1,\ldots,6\}$ and $\Omega_2=\{1',\ldots,6'\}$. It is an exercise left to the reader, to check that for $R=\{1,2,3,1'\}^G \cup \{\Omega_1\}$ we have $G=\G(R)$.

Unfortunately, we cannot use Lemma~\ref{l:reg} to complete the proof, because as one can check, $A_6 ||_\psi A_6$ has no regular set. So we still need to consider the group $G' = A_6^{(2)} ||_\psi A_6$. This group, has regular sets of all sizes from $4$ to $20$, and it belongs to $BGR(2)$. The latter can be checked analogously, as the previous case. We take $\Omega' = \Omega_1\cup\Omega_2\cup\Omega_3$, where $\Omega_3=\{1'',\ldots,6''\}$. We form a defining relation for $G'$ using the relation $R$ for $G$. We define
$R'=R \cup \{\Omega\} \cup \{\{1',1''\}, \ldots,\{6',6''\}\}$. It is easy to check that $G'=\G(R')$. Now, using Lemmas~\ref{l:sies} and~\ref{l:reg} completes the proof.
\end{proof}

Now we consider the groups from the list $\mathcal L$ introduced in Lemma~\ref{l:bez_reg}.
We show that although these groups have no regular sets, their sums have. First we prove the following

\begin{Lemma}\label{l:h2}
For each group $H$ in the list $\mathcal L$, and each $r \geq 2$,
$H^{(r)} \in BGR(2)^{\#}$.
\end{Lemma}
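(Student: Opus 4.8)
The plan is to exploit the structural results already built up, in particular Lemma~\ref{l:reg} and Lemma~\ref{l:sies}, together with the regular-set information for sums that the preceding lemmas furnish. The groups $H$ in $\mathcal L$ are exactly the simple primitive groups (other than $A_n$) with no regular set. The crucial obstruction to applying Lemma~\ref{l:reg} directly is precisely that each single copy $H$ has \emph{no} regular set, so the hypothesis ``$H$ has a regular set'' fails. The whole point of the lemma to be proved is that passing to two or more parallel copies repairs this defect: even though each constituent fails to distinguish its own points by a single set, two copies acting in parallel acquire enough freedom to build a regular set for the sum.

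First I would reduce to the case $r=2$: since $H^{(r)} = H^{(2)} \oplus_{} H^{(r-2)}$ in the relevant sense, or more cleanly, once I exhibit a defining relation for $H^{(2)}$ together with a regular set, I can bootstrap to larger $r$ using the construction in Lemma~\ref{l:An} (the singleton/pairs relation $Q$ that rigidifies the copies into a parallel multiple) layered on top of the $H$-structure. So the heart of the matter is to produce, for each of the finitely many groups $H \in \mathcal L$, a relation $R_0$ with $\G(R_0)=H$ on a single copy (such $R_0$ exists because, by Lemma~\ref{l:bez_reg}(ii), every group in $\mathcal L$ except $C_5$ and $PSL(2,8)$ lies in $BGR(2)$, and $C_5, PSL(2,8)$ are not in $\mathcal L$), and then to find a set $y \subseteq \Omega_1 \cup \Omega_2$ in two copies whose stabilizer in $H^{(2)}$ is trivial. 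The existence of such a $y$ is what ``two copies have a regular set'' amounts to, and this is exactly a distinguishing-number statement: $H^{(2)}$ has a regular set if and only if $D(H^{(2)})=2$.

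The main obstacle, therefore, is verifying that $D(H^{(2)}) = 2$ for each $H \in \mathcal L$, i.e.\ that the parallel square of each of these fifteen exceptional groups does acquire a regular set even though $H$ itself does not. I expect this to be handled by a direct computation in GAP for each group on the list: one computes the orbits of $H^{(2)}$ on subsets of the $2n$-element union and checks whether some orbit has length $|H|$ (the criterion already used elsewhere in the paper — an orbit of full length consists of regular sets). This is finite casework over a short explicit list, and the distinguishing numbers of these small groups are bounded (the paper later cites $D(G,G/H)\le 4$ for the analogous primitive situations), so a regular set for the square is plausible a priori and checkable in practice.

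Once a regular set $y$ for $H^{(2)}$ is in hand, I would finish by combining it with the defining relation: take $R = R_0^{(1)} \cup R_0^{(2)} \cup Q \cup y^{H^{(2)}}$, where the copies of $R_0$ pin down the action on each orbit to lie in $H$, the relation $Q$ forces the parallel coupling (as in the proof of Lemma~\ref{l:An}), and the orbit $y^{H^{(2)}}$ together with regularity of $y$ kills the residual automorphisms, giving $\G(R) = H^{(r)}$. Arranging that the cardinality $|y|$ avoids the other arities used, or replacing $y$ by its complement or by a slightly enlarged regular set as in the earlier lemmas, lets me invoke Lemma~\ref{l:sies} to conclude that \emph{every} subgroup belongs to $BGR(2)$ as well, yielding $H^{(r)} \in BGR(2)^{\#}$. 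The only genuinely nonroutine ingredient is the per-group regular-set verification for the square; everything after that is the same relation-assembly pattern used repeatedly above.
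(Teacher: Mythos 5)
There is a genuine gap, and it is concentrated in one sentence of your proposal: the claim that ``$C_5, PSL(2,8)$ are not in $\mathcal L$.'' While $C_5$ indeed does not appear, $PSL(2,8)$ \emph{does}: it is the entry $(9, L_2(8))$ of the list $\mathcal L$. This group is set-transitive, hence belongs to no class $BGR(k)$ at all, so for $H=PSL(2,8)$ there is no relation $R_0$ with $\G(R_0)=H$, and your defining relation $R_0^{(1)}\cup R_0^{(2)}\cup Q\cup y^{H^{(2)}}$ cannot even be written down. This is not a removable convenience in your argument but its backbone: the copies of $R_0$ are what pin the action on each orbit down to $H$. The paper is forced to treat this case by a different construction: it starts from the larger group $S_9^{(2)}=\G(R_2)$ (with $R_2$ the complemented coupling relation, of arities $\{16,17\}$), and then finds a single set $y=\{1,2,3,4,2',3',4',5'\}$ whose setwise stabilizer in $H^{(2)}$ is trivial \emph{and} whose orbit $y^{H^{(2)}}$, adjoined to $R_2$, cuts $S_9^{(2)}$ down exactly to $H^{(2)}$; the complement of $y$ is then a regular set of cardinality $10$ disjoint from the relation, so Lemma~\ref{l:sies} applies. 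Here the orbit of the regular set is an essential part of the defining relation, not a redundant add-on as in your sketch. Without this (or some substitute), your argument covers only fourteen of the fifteen groups, and the lemma is not proved.

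For the remaining groups your outline does match the paper's proof in all essentials: reduction to $r=2$ (the paper gets $r>2$ directly from Lemma~\ref{l:reg}, since $H^{(r)}$ is a parallel sum with component $H^{(2)}\in BGR(2)$ possessing a regular set --- cleaner than re-layering $Q$), a defining relation for $H$ combined with the coupling relation from Lemma~\ref{l:An}, per-group GAP verification of a regular set for $H^{(2)}$ (the paper's Table~1, with explicit generators and sets), and a final appeal to Lemma~\ref{l:sies}. One smaller point you gloss over: the arities of $Q$ (singletons and pairs) may collide with those of $R_0$, in which case a permutation preserving the union need not preserve the two parts separately, and the rigidification argument breaks. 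The paper fixes this by replacing $Q$ with the family of complements of its sets, yielding arities $\{2n-2,2n-1\}$, disjoint from $ar(R_0)\subseteq\{1,\dots,n-1\}$. That repair is the same trick you invoke for $|y|$, so this is an omission rather than an error; the $PSL(2,8)$ case is the real hole.
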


\begin{proof}
In view of Lemma~\ref{l:reg}, it is enough to prove the claim for $r=2$.
First assume that $H\neq PSL(2,8)$. Then, by Lemma~\ref{l:bez_reg}, $H\in BGR(2)$. Let $n$ be degree of $H$, $\Omega=\{1,2,\ldots,n\}$, and $\Delta= \{1',2',\ldots,n'\}$. Then, since $H\in BGR(2)$, there is a relation $R_1$ on $\Omega$ with sets of cardinality less than $n$ such that $H=\G(R_1)$.

Let $Q$ be the relation defined in the proof of Lemma~\ref{l:An} for $r=2$ transferred into the set $\Omega\cup \Delta$ by using the natural bijection. The relation $Q$ is to guarantee the parallel action on the sets $\Omega$ and $\Delta$. Yet, the arities of $Q$ and $R_1$ may not be disjoint. Therefore we replace $Q$ by the family $R_2$ of the complements of the sets in $Q$. Then, $ar(R_2) = \{2n-1, 2n-2\}$. Since $2n-2 \geq n$ (as $|\Omega|\geq 2$), $ar(R_1) \cap ar(R_2) = \varnothing $. This guarantees the parallel action on the sets $\Omega$ and $\Delta$, as well.
Hence, putting $R=R_1\cup R_2$, it follows that $\G(R)$ preserves both $R_1$ and $R_2$, which means that $\G(R) = H^{(2)}$, as required.

To apply Lemma~\ref{l:sies}, we need to find a regular set $y$ in $H^{(2)}$ of cardinality $n \leq |y| \leq 2n-3$.
Note that if we find a regular set $z$ of cardinality $3\leq |z| \leq n$, then the set $y = (\Omega\cup \Delta) \setminus z$ is also regular, and has cardinality $2n-3\geq |y| \geq n$, as required. It follows that all we need is to show that in each case there exists a regular set $y$ of cardinality $3 \leq |y| \leq 2n-3$.
This can be done easily with GAP. To provide easy checking for the reader we give some details of computations.

Let us start from the group
$H=L_2(5)$ acting on $n=6$ elements. To present a regular set for $H^{(2)}$ (that the reader can easily check himself or herself) we need to define the underlying set $\Omega$ and generating permutations for $H^{(2)}$. In each case for $\Omega$ we take $\Omega=\{1,2,\dots,n\} \cup\{1',2',\ldots,n'\}$. Then, we find a set of generators for $H$ (one may use, for example, the GAP listing of primitive groups of degree $n$). In this case we get
$\gg = (1,2,5)(3,4,6)$ and $\hh =(3,5)(4,6)$.
We form permutations $\gg^{(2)}$ and $\hh^{(2)}$ obtaining
\begin{align*}
\gg^{(2)} &= (1,2,5)(3,4,6)(1',2',5')(3',4','6), \\
\hh^{(2)} &= (3,5)(4,6)(3',5')(4',6').
\end{align*}
Clearly, the above permutations generate $H^{(2)}$. A required regular set is, for instance, $y=\{1,2,3,8,10,12\}$.

The proof in other cases is the same. The only change is in the choice of permutations $\gg$ and $\hh$ generating $H$, and the regular set $y$. These details are given in Table~\ref{t:1} for each of the cases $H\neq PSL(2,8)$.

\begin{table}[ht!]
\begin{center}
\caption{}
\label{t:1}

\small
\begin{tabular}{ |l|l|l|l| }
\hline
$n$ &$H$ & generators $g$ and $h$ of $H$ & regular set $y$ in $H^{(2)}$ \\ \hline

$7$ & $L_3(2)$ & {$(1,4)(6,7),$} & $\{1,2,3,3',5'\}$ \\
& & {$ (1,3,2)(4,7,5)$} & \\ \hline
$8$ & $L_2(7)$ & {$ (3,7,8)(4,6,5),$} & $\{1,2,2',7'\}$ \\ & & {$ (1,4,2,5,7,8,6)$} & \\ \hline
$10$ & $L_2(9)$ & {$ (1,9,6,3,8)(2,10,7,5,4),$} & $\{1,2,3',5'\}$\\
& & {$ (1,10,6,2,5)(3,4,9,8,7)$} & \\ \hline
$11$ & $L_2(11)$ & {$ (1,5)(2,4)(3,10)(7,11),$} & $\{1,2,3',5',6'\}$\\
& & {$ (3,11,5)(4,7,9)(6,8,10)$} & \\ \hline
$11$ & $M_{11}$ & {$ (1,2,3,4,5,6,7,8,9,10,11)
,$} &
$\{1,2,3,1',5',7'\}$\\
& & {$
(3,7,11,8)(4,10,5,6)
$} & \\ \hline

$12$ & $M_{11}$ & {$ (1,12)(2,10,5,7)(3,8)(4,6,11,9)
,$} &
$\{1,2,3,4,1',4',7'\}$\\
& & {$
(1,3)(2,7)(8,11)(9,10)
$} & \\ \hline
$12$ & $M_{12}$ & {$ (1,4,12,6)(2,7,5,9,8,10,3,11),$} &
$\{1,2,3,4,11,1',2',5',$\\
& & {$
(1,12)(2,6,4,9,7,8,11,3)
$} & $8',9'\}$ \\ 
\hline
$13$ & $L_3(3)$ & {$ (1,10,4)(6,9,7)(8,12,13),$} &
$\{1,2,3,4,1',7',9'\}$\\
& & {$
(1,3,2)(4,9,5)(7,8,12)(10,13,11)
$} & \\ 
\hline

$15$ & $L_4(2)$ & {$ (1,9,5,14,13,2,6)(3,15,4,7,8,12,11),$} &
$\{1,2,3,4,5,6,9,1',$\\
& & {$
(1,3,2)(4,8,12)(5,11,14)(6,9,15)
$} & $7',8',11',15'\}$ \\ 
\hline

$22$ & $M_{22}$ & {$ (1,13,11,17)(2,7)(3,22,12,21)(4,18,16,10)$} & $\{1,2,3,4,7,9,1',3',8'\}$\\ 
& & {$(6,20,19,14)(9,15),$} & \\ 
& & {$ (1,6,12,11,14,5,22)(2,19,16,9,13,21,8)$} & \\
& & {$ (3,17,18,15,7,4,10)$} & \\ \hline

$23$ & $M_{23}$ & {$ (1,2,3,4,5,6,7,8,9,10,11,12,13,14,15,16,$} & $\{1,2,3,4,9,12,1',10',$\\ 
& & {$ 17,18,19,20,21,22,23),$} & $17'\}$\\ 
& & {$ (3,17,10,7,9)(4,13,14,19,5)(8,18,11,12,23)$} & \\
& & {$ (15,20,22,21,16)$} & \\ \hline

$24$ & $M_{24}$ & {$ (1,5)(2,14,7,12)(3,21)(4,17,16,11)$} & $\{1,2,3,5,11,13,19,1'$\\ 
& & {$ (6,20,23,22)(9,10,15,13),$} & $4',14',18',24'\}$\\ 
& & {$ (1,19,15,8,20,23,24,9,14,11,5,10,22,13,2)$} & \\
& & {$ (3,6,4)(7,16,12,17,18)$} & \\ \hline
\end{tabular}
\end{center}
\end{table}

It remains to consider the case when $H=PSL(2,8)$ acting on $n=9$ elements. Since it does not belong to $BGR$, we need to apply another approach. First of all we consider a larger group $S_9^{(2)} \supseteq H^{(2)}$ that belongs to $BGR(2)$. We have $S_9^{(2)}= \G(R_2)$, where $R_2$ is a relation defined in the first part of the proof, for $n=9$ in this case.

As before, using GAP we find generators for $H=PSL(2,8)$. We use
$\gg =(1,5,4,2,8,3,6)$ and
$\hh=(1,8,6,2,7,3,9)$.
As before, we form generators $\gg^{(2)}$ and $\hh^{(2)}$ on the set $\{1,\ldots,9\} \cup \{1',\ldots,9'\}$ for the group $H^{(2)}$.

Now we look for a set $y$ not only having the trivial stabilizer in $G=H^{(2)}$, but also generating $G$ in $S_9^{(2)}$ in the way explained below. We find that the set $y=\{1,2,3,4,2',3',4',5'\}$ is as required. It has the trivial stabilizer in $G$, and denoting $R_1 = y^G$, the orbit of $y$ in $G$, we obtain $G = \G(R)$ for $R=R_1\cup R_2$. We note that $R_1$ and $R_2$ are disjoint (since $ar(R_2)= \{17,16\}$), and the fact that $G = \G(R)$ may be easily checked using GAP.

It remains to observe that the complement of $y$ is also a regular set in $G$, and having $10$ elements, it is disjoint with $R$. This completes the proof.
\end{proof}

We also need to consider subdirect sums of elements from $\mathcal L$ that arise by using a nonpermutation automorphism. 
It is easy to check that the groups in $\mathcal L$ having nonpermutation automorphisms are the following:
$${\mathcal L}^* =\{ L_3(2), L_2(11), L_3(3), L_4(2), M_{12} \}.$$
Moreover, one may also check that the index $[Aut(G):PAut(G)]=2$ in each of these cases. We use this to prove the following.

\begin{Lemma}\label{l:hh}
For a group $H\in \mathcal L$, if $G$ is a parallel sum of $r>1$ copies of $H$, and $G$ is different from $G=H^{(r)}$, then $G\in BGR(2)^{\#}$.
\end{Lemma}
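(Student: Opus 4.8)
The plan is to reduce to the case of two parallel summands and then exploit the fact that each $H\in\mathcal L$ has index $[\mathrm{Aut}(H):\mathrm{PAut}(H)]=2$. Since up to permutation isomorphism the parallel sum is associative and commutative (as explained after Proposition~\ref{p:simple}), and since permutation automorphisms give parallel multiples, the single nonpermutation automorphism $\psi$ (unique up to $\mathrm{PAut}(H)$) means that any parallel sum of $r$ copies of $H$ is permutation isomorphic to $H^{(r_1)}\,||_\psi\, H^{(r_2)}$ for some $r_1,r_2\geq 1$ with $r_1+r_2=r$. Indeed, each of the $r$ components is glued to a fixed reference copy either by a permutation automorphism (landing it in one parallel multiple) or by a $\psi$-twisted one (landing it in the other), so the whole group collapses to two parallel multiples joined by $\psi$. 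For $H\in\mathcal L\setminus\mathcal L^*$ there is no nonpermutation automorphism, so $G=H^{(r)}$, which is excluded; hence I only need to treat $H\in\mathcal L^*$.

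For $H\in\mathcal L^*$, write $G=H^{(r_1)}\,||_\psi\, H^{(r_2)}$. First I would dispose of the case where one of the summands is already a genuine parallel multiple of size at least $2$. By Lemma~\ref{l:h2}, $H^{(2)}\in BGR(2)^{\#}$, so if $r_1\geq 2$ (say), then $H^{(r_1)}\in BGR(2)^{\#}$ by Lemma~\ref{l:reg} together with Lemma~\ref{l:h2}; in particular $H^{(r_1)}$ is in $BGR(2)$ and possesses a regular set. Viewing $G=H^{(r_1)}\,||_\psi\, H^{(r_2)}$ as a parallel sum with first component $H^{(r_1)}$, I apply Lemma~\ref{l:reg} directly: a group in $BGR(2)$ with a regular set, placed as the left summand of a parallel sum, forces the whole sum into $BGR(2)^{\#}$. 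The same works if $r_2\geq 2$ by commutativity of the parallel sum. This settles everything except the genuinely twisted single-copy case $r_1=r_2=1$, i.e. $G=H\,||_\psi\, H$.

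The remaining and delicate case is $G=H\,||_\psi\, H$ for each of the five groups in $\mathcal L^*$. Here the obstruction is exactly that $H$ itself has no regular set (it lies in $\mathcal L$), so Lemma~\ref{l:reg} is unavailable and I cannot build the defining relation abstractly. The plan is to handle these five groups computationally with GAP, in the style already used in the proof of Proposition~\ref{p:ti} and Lemma~\ref{l:Ann}: for each $H\in\mathcal L^*$ I would fix explicit generators of $H$ in its natural degree-$n$ action, compute a nonpermutation automorphism $\psi$ (the unique nontrivial coset of $\mathrm{PAut}(H)$ in $\mathrm{Aut}(H)$), and form explicit generators of $G=H\,||_\psi\, H$ on the disjoint union of two copies of the underlying set. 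Then I would exhibit a set $y$ whose $G$-orbit $R_1=y^G$ together with an auxiliary relation (for instance the orbit of a small set pinning down the partition into the two copies, or $\{\Omega_1\}$ as in Lemma~\ref{l:Ann}) defines $G$, verifying $G=\G(R)$ in GAP, and I would report that $G$ has a regular set of a cardinality outside $ar(R)$. As in Lemma~\ref{l:h2}, taking a complement of a regular set relocates its cardinality away from the arities in $R$, after which Lemma~\ref{l:sies} upgrades $G\in BGR(2)$ to $G\in BGR(2)^{\#}$.

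The main obstacle is precisely the five twisted cases $H\,||_\psi\, H$: because these $H$ lack regular sets, there is no structural shortcut and one must produce, for each group, an explicit defining relation and an explicit regular set and check them by computer. The degrees involved (up to $24$) make direct enumeration of all subsets infeasible, so the verification relies on the same orbit-cardinality trick noted in Proposition~\ref{p:ti}: a $G$-orbit of $k$-sets has size $|G|$ exactly when its members are regular, and $\G(R)$ is computed as a setwise stabilizer inside a manageable overgroup (the wreath-type stabilizer of the partition into the two copies). The only genuinely new content beyond Lemma~\ref{l:h2} is the presence of the twist $\psi$; once the explicit generators of $H\,||_\psi\, H$ are in hand, the search for a suitable $y$ and the check $G=\G(R)$ proceed exactly as before, and I would present only the generators, the set $y$, and the range of sizes of regular sets, leaving the routine GAP verification to the reader.
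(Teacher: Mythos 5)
Your proposal is correct and follows essentially the same route as the paper: reduce, via Lemma~\ref{l:reg} and the fact that $[\mathrm{Aut}(H):\mathrm{PAut}(H)]=2$ for $H\in{\mathcal L}^*$ (with no nonpermutation automorphisms for the remaining groups in $\mathcal L$, so those sums are excluded parallel multiples), to the single twisted case $G=H||_\psi H$ for the five groups in ${\mathcal L}^*$, and then settle those five cases by explicit GAP computations that produce generators of $H||_\psi H$, a defining relation $R$, and regular sets of cardinalities outside $ar(R)$, upgraded to $BGR(2)^{\#}$ by Lemma~\ref{l:sies}. The one wrinkle you do not anticipate is that for $M_{12}$ the paper found the direct construction computationally infeasible and instead defined $G$ by orbits inside overgroups (a transitive $T$ and $M_{24}$) --- but this is precisely the overgroup-stabilizer device you yourself describe, so your plan matches the published proof in substance.
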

\begin{proof}
Again, in view of Lemma~\ref{l:reg}, it is enough to prove the claim for $r=2$. In view of the remarks above we may restrict to groups $H\in {\mathcal L}^*$ and to one nonpermutation automorphism $\psi$ in each case. We prove the existence
of a relation $R$ such that $G=H||_\psi H=\G(R)$ and it has a regular set $y$ with $|y| \notin ar(R)$. By Lemma~\ref{l:reg}, this implies that $G\in BGR(2)^{\#}$, as required.

As in the proof of Lemma~\ref{l:Ann}, first we find any permutation automorphism of $H$. For $H=L_3(2)$, using GAP, one can find any permutation
automorphism $\psi$ given by the images of generators $$\big( (1,2)(5,7)\big)\psi = (1, 2)(3, 6), \hbox{\rm\ and \ } \big( (2,3,4,7)(5,6)\big)\psi =
(2, 3, 4, 7)(5, 6).$$
Hence, the group $G=H||_\psi H$ on the set $\Omega=\{1,\ldots,7\} \cup \{1',\ldots, 7'\}$ is generated by permutations
$$ g =(1,2)(5,7)(1',2')(3',6') \hbox{\rm\ and }
h= (2,3,4,7)(5,6)(2',3',4',7')(5',6').$$
Now, by Lemma~\ref{l:bez_reg}, there exist a relation $R_0$ on $\{1,\ldots,7\}$ such that $H=\G(R_0)$. Since $H$ is transitive, one may assume that no singleton belongs to $R_0$. Also we may assume that $\{1,\ldots,7\}\notin R_0$. Let $R_0'$ be a copy of $R_0$ on $\{1',\ldots,7'\}$. Let $R=R_0\cup R_0'\cup \{1,\ldots,7\} \cup \{1,1'\}^G$. It is easy to check that $G=\G(R)$ and that $G$ has regular sets of all sizes from $4$ to $10$. Since $ar(R) \subseteq \{2,\ldots,7\}$, the claim follows.

For the next three groups in ${\mathcal L}^*$ the constructions are the same, and the only difference is the nonpermutation automorphism used. Below we give information about regular sets and any permutation automorphism in each of these cases.\smallskip

\noindent a) $L_2(11)$ (regular sets of sizes from $4$ to $18$):
{\small
\begin{align*}
\big((1, 3)(2, 7)(5, 9)(6, 11)\big)\psi &= (1, 4)(2, 3)(5, 10)(9, 11), \\
( (3, 5, 11)(4, 9, 7)(6, 10, 8))\psi &= (3, 5, 11)(4, 9, 7)(6, 10, 8)
\end{align*}}
b) $L_3(3)$ (regular sets of sizes from $6$ to $20$):
{\small
\begin{align*}
\big( (3,5,11)(6,7,9)(8,12,13)\big)\psi &= (3, 8, 7)(5, 12, 9)(6, 11, 13)\\
\big((1,13,7)(2,10,6)(3,5,12)(4,11,9)\big)\psi & = (1, 13, 7)(2, 10, 6)(3, 5, 12)(4, 11, 9)
\end{align*}}
c) $L_4(2)$ (regular sets of sizes from $6$ to $24$.):
\smallskip

{\small
$\big( (1, 9, 5, 14, 13, 2, 6)(3, 15, 4, 7, 8, 12, 11) \big)\psi = (1, 4, 2, 14, 13, 7, 8)(3, 10, 15, 9, 5, 6, 12)$ }
\smallskip

{\small
$\big( (1, 3, 2)(4, 8, 12)(5, 11, 14)(6, 9, 15)(7, 10, 13) \big)\psi = $ }

{\small
\hfill $ = (1, 2, 3)(4, 14, 10)(5, 12, 9)(6, 13, 11)(7, 15, 8)$
}
\smallskip

\noindent d) For $M_{12}$ the construction as above turned out too complex for computations. We have applied another approach, the results of which can be checked easily using GAP.
The group $G = M_{12} ||_\psi M_{12}$ different from $M_{12}^{(2)}$ is generated by $$(1,19,17,23,2,4,8,9)(6,16,21,22)(3,12)(5,24,15,7,18,10,20,13), \hbox{\rm\ and}$$
$$(1,23,8,19,9,6,4,21,16,22,2)(3,14,12,7,11,20,13,5,15,24,10)$$
It is contained in the transitive group $T$ generated by $$(1,16,23,19,9,21,2,4)(5,15,12,10,18,24,14,20)(6,8,22,17)(7,13), \hbox{\rm\ and}$$
$$(1,5)(2,11)(3,22)(4,15)(6,10)(7,23)(8,24)(9,14)(12,21)(13,19)(16,20)(17,18),$$
which in turn, is contained in $M=M_{24}$ generated by
$$(1,5)(2,14,7,12)(3,21)(4,17,16,11)(6,20,23,22)(9,10,15,13)\hbox{\rm\ and}$$
$$(1,19,15,8,20,23,24,9,14,11,5,10,22,13,2)(3,6,4)(7,16,12,17,18).$$
Now, the defining relation for $G$ is $R= \{1,2,3,4,5,6\}^M\cup \{1,2,3\}^T, \{1,2\}^G$. A~regular set in $G$ is $\{1,2,\ldots,10\}.$
\end{proof}

In the next lemma we consider the last case needed to prove our main result.

\begin{Lemma}\label{l:pary}
For each pair of groups $H \cong K$ in the list $\mathcal L$ that are different as permutation groups, the group $H ||_\phi K \in BGR(2)^{\#}$.
\end{Lemma}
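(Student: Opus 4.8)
The plan is to treat Lemma~\ref{l:pary} as the remaining case of the program carried out in Lemmas~\ref{l:h2} and~\ref{l:hh}: there we handled $H^{(r)}$ and the parallel sums of isomorphic copies of a single $H\in\mathcal L$ that differ from $H^{(r)}$ by a nonpermutation automorphism. Here the two transitive components $H$ and $K$ are abstractly isomorphic groups from $\mathcal L$ but are \emph{inequivalent as permutation representations}, so they act on sets of the same abstract group but via genuinely different permutation actions (for instance the two degrees of $M_{11}$, namely $11$ and $12$, or the two actions of degree $n$ when the same group appears twice in $\mathcal L$ with different point sets). First I would inspect the list $\mathcal L$ and determine exactly which abstractly isomorphic pairs arise as distinct permutation groups; the relevant ones are those appearing under two names or two degrees, e.g. $L_3(2)\cong L_2(7)$ (degrees $7$ and $8$), $L_2(9)\cong A_6$, $L_4(2)\cong A_8$, and $M_{11}$ in degrees $11$ and $12$. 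Each such pair gives finitely many groups $H||_\phi K$ to examine.

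The core strategy mirrors the proof of Lemma~\ref{l:h2}. Writing $H\leq Sym(\Omega)$ and $K\leq Sym(\Delta)$ on disjoint point sets, the key structural fact is that $H$ and $K$ act on sets of (in general) \emph{different} cardinalities, or at least via inequivalent actions, so the parallel sum can be pinned down by a relation on $\Omega\cup\Delta$ built from three ingredients: a defining relation $R_1$ for the action on $\Omega$ (available whenever the component lies in $BGR(2)$, by Lemma~\ref{l:bez_reg}), a defining relation for the action on $\Delta$, and a coupling relation consisting of the $G$-orbit of one carefully chosen set meeting both $\Omega$ and $\Delta$, whose role is to force the parallel, $\phi$-linked action across the two blocks. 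As in the earlier lemmas, I would arrange arities to be disjoint --- taking complements where necessary so that the coupling sets have cardinalities outside $ar(R_1)\cup ar(R_2)$ --- so that any $\gg\in\G(R)$ first preserves the partition into $\Omega$ and $\Delta$, then restricts to the correct action on each block, and finally, using the coupling orbit together with the fact that a regular set on one side decodes the matching element on the other, is forced to respect the isomorphism $\phi$. Invoking Theorem~\ref{th:s} then identifies $\G(R)$ as $H||_\psi K'$, and a cardinality/bijection count forces $K'=K$ and $\psi=\phi$. To secure membership in $BGR(2)^{\#}$ rather than merely $BGR(2)$, I would exhibit a regular set $y$ for $H||_\phi K$ of cardinality outside $ar(R)$ and apply Lemma~\ref{l:sies}; since the two components act inequivalently, a set that is ``generic'' with respect to both actions should have trivial stabilizer, and its complement provides the needed flexibility in cardinality.

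The main obstacle is that the individual components here come from $\mathcal L$, and every group in $\mathcal L$ \emph{lacks} a regular set by definition, so I cannot directly apply Lemma~\ref{l:reg} (which requires a regular set in one component). This is precisely the difficulty already met for $PSL(2,8)$ in Lemma~\ref{l:h2}, and I expect the resolution to be analogous: even though neither $H$ nor $K$ has a regular set on its own, the parallel sum $H||_\phi K$ typically does, because a set can be made ``asymmetric'' by exploiting the inequivalence of the two actions simultaneously, and the coupling relation lets the construction go through without a single-component regular set. Concretely, for each of the finitely many pairs I would, as in Lemma~\ref{l:h2}, produce explicit generators via GAP, exhibit an explicit set $y$ whose $G$-orbit together with defining relations for the two constituents generates exactly $H||_\phi K$, and verify computationally both that $\G(R)=G$ and that $G$ possesses regular sets in a range of cardinalities (reporting the range as in the tables above). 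For the pairs where one of the two actions is $A_n$ or $S_n$-like (such as $L_4(2)\cong A_8$), I would additionally have to check that the alternating action does not enlarge the symmetry group beyond the parallel sum --- the analysis of Lemma~\ref{l:An} indicates why the coupling with a second, inequivalent action prevents this collapse. The proof therefore reduces to a finite case analysis backed by explicit constructions, with the conceptual content lying entirely in how the inequivalence of the two permutation actions manufactures a regular set that neither constituent enjoys.
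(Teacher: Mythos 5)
Your overall blueprint matches the paper's proof: a finite case analysis over the abstractly isomorphic but permutationally inequivalent pairs arising from $\mathcal L$, a defining relation built from relations on the two constituents plus a coupling orbit $x^G$ with arity control, a GAP verification that $\G(R)=G$, and a regular set obtained as the complement of $x$ so that Lemma~\ref{l:sies} upgrades membership in $BGR(2)$ to $BGR(2)^{\#}$. Two points of bookkeeping: the proof requires the \emph{complete} list of pairs, and there are exactly five --- your ``e.g.''~list omits $A_5\cong L_2(5)$ (degrees $5$ and $6$); and you correctly note that the uniqueness of $H||_\phi K$ up to permutation isomorphism must be addressed (in each pair one constituent has only permutation automorphisms, so the choice of $\phi$ is immaterial).

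The genuine gap is in your treatment of the three pairs in which one constituent is an alternating group in its natural action ($A_5$, $A_6$, $A_8$). Your construction posits ``a defining relation for the action on $\Delta$, available whenever the component lies in $BGR(2)$,'' but $A_n\notin BGR$ at all, so no such relation exists for these constituents, and your fallback --- that ``the analysis of Lemma~\ref{l:An} indicates why the coupling with a second, inequivalent action prevents this collapse'' --- is not an argument: Lemma~\ref{l:An} concerns parallel multiples of $A_n$ on equal copies and says nothing about a coupling with an inequivalent action of larger degree. The paper's resolution is a concrete device you are missing: instead of a parallel-sum overgroup, take the \emph{least direct sum} $H'\oplus K'$ containing $H||_\phi K$ with $H',K'\in BGR(2)$ (so $K'=S_n$ when $K=A_n$, and $H'=H$, $K'=K$ otherwise), define it by $R'=R_1\cup R_2$ with the asymmetric normalization $\Omega_1\in R_1$, $\Omega_2\notin R_2$ (which forces preservation of the partition even when the constituents could have equal degree), and then cut $G$ out of $H'\oplus K'$ by a single coupling orbit: $R=R'\cup x^G$, verified in GAP by computing that the setwise stabilizer of the family $x^G$ in $H'\oplus K'$ is exactly $G$. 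With that device the arity bound $m\leq\max\{|\Omega_1|,|\Omega_2|\}$ for $R'$ makes the complement of a regular $x$ (or, as in the $L_4(2)||A_8$ case where $x$ itself is not regular, of a separately found regular set) satisfy $|y|\notin ar(R)$, so Lemma~\ref{l:sies} applies; without it, your argument has no way to bound $\G(R)$ on the alternating constituent, which is precisely where the symmetry group could a priori grow to $S_n$.
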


\begin{proof}
Using Lemma~\ref{l:bez_reg} we infer that there are exactly five possibilities for $H$ and~$K$:

\renewcommand{\labelenumi}{(\roman{enumi})}
\begin{enumerate}
\item $n=5, 6$ with $A_5 \cong L_2(5)$,
\item {$n=7, 8$ with $L_3(2) \cong L_2(7)$,}
\item $n=6, 10$ with $A_6 \cong L_2(9)$,
\item {$n=11, 12$ with two actions of $M_{11}$,}
\item $n=8, 15$ with $A_8 \cong L_4(2)$,
\end{enumerate}

For each pair we apply the same strategy, similar to that in the proof of Lemma~\ref{l:h2}. The main difference is that in the case of $G=H|| K$ we have no larger group in $BGR(2)$ in the form of parallel sum. Instead we look for the smallest direct sum $H'\oplus K'$ containing $H|| K$ such that we may easily see that $H'\oplus K'$ is the automorphism group of some Boolean function.

Let $H$ and $K$ act on disjoint sets $\Omega_1$ and $\Omega_2$, and let $\Omega=\Omega_1\cup \Omega_2$. We take the least groups $H'$ and $K'$ such that $H\leq H'\leq Sym(\Omega_1)$, $K\leq K'\leq Sym(\Omega_2)$, and $H',K' \in BGR(2)$. Let $R_1$ and $R_2$ be relations on $\Omega_1$ and $\Omega_2$ such that $H'= \G(R_1)$ and $K'= \G(R_2)$. As we have observed in Section~\ref{s:pre}, we may assume that $\Omega_1\in R_1$ or not, and $\Omega_2\in R_2$ or not.
Without loss of generality we assume that $H$ and $K$ are ordered so that $|\Omega_1| \geq |\Omega_2|$, and $\Omega_1\in R_1$, while $\Omega_2\notin R_2$.

Then, $H'\oplus K'$ has a defining relation $R' = R_1\cup R_2$. Indeed, if $\gg\in \G(R')$, then $\gg$ preserves $\Omega_1$, and therefore it preserves $R_2$, as well. Consequently, $\G(R') \subseteq Sym(\Omega_1) \oplus Sym(\Omega_2)$. Since $R_1$ and $R_2$ are defining relations for $H'$ and $K'$, respectively, it follows that $\G(R) =H'\oplus K'$, as required. Moreover, it is clear that for each $m\in ar(R')$, $m\leq |\Omega_1| =\max\{|\Omega_1|,|\Omega_2|\}$.

Next, we look for a set $x\subseteq \Omega$ such that the orbit of $x$ in $G=H|| K$ defines $G$ in $H'\oplus K'$, i.e., such that if we put $R = R'\cup x^G$, then $G=\G(R)$. Having this, the last step is to find a regular set $y$ with $|y|\notin ar(R)$. By the inequality established above, it is enough that $|y|> |\Omega_1|$.

Now, in case (i), we take
$G=L_2(5)|| A_5$, $H'=L_2(5)$, and $K'=S_5$.
As the generators of $L_2(5)\oplus S_5$ we take $\{1,3,4)(2,5,6), (1,2)(3,4), (7,8,9,10,11), (7,8)\}$. As the generators of $G=L_2(5)||A_5$ (formed similarly as the proof of the previous lemma) we take $ (1,3,4)(2,5,6)(8,9,11)$ and
$(1,2)(3,4)(7,8)(9,10)$.

We note that the group $G=L_2(5)|| A_5$ is unique up to permutation isomorphism. This is because $A_5$ has only permutation automorphism (even if $L_2(5)$ has any permutation automorphism). The situation is similar in the remaining cases (one of the groups has only permutation automorphisms), so we make no further mention about it.

We explain the details of finding a set $x$ defining $G$ in $H'\oplus K'$. We make use of the fact that the sets in $R'$ are all contained in one of the orbits defined by $R'$, Therefore any set that has elements in each of the orbits certainly does not belong to $R'$. By trial and error, using GAP, we check the set $x=\{1,3,7,9\}$.
Its cardinality may belong to $ar(R')$, but by the remark above, the orbit $x^G$
is certainly disjoint with $R'$, and therefore to check that $G=\G(R)$, similarly as in the proof of Lemma~\ref{l:sies}, it is enough to check that the stabilizer of the family $x^G$ in $L_2(5)\oplus S_5$ acting on subsets of $\Omega$ is precisely the group $G$. This is easily done using GAP.
Finally, we check that $x$ is actually a regular set in $G$. Therefore, to apply Lemma~\ref{l:reg}, as a
regular set $y$ with $|y|\notin ar(R)$ one may take simply the complement of $x$ in $\Omega$, whose cardinality is $|y|=7 > 6=|\Omega_1|$, as required.

In case (ii), we take
$G=L_2(7)||L_3(2)$, $H'=L_2(7)$, and $K'=L_3(2)$. Generally, in other cases we take $H'=H$ and $K'=K$ except when $H=A_n$, in which case we take $H'=S_n$. This guarantees that in each case $H',K'\in BGR(2)$.

As the generators of $L_2(7)\oplus L_3(2)$ we take the set $$\{(1,6,5)(2,3,7), (9,11,10)(12,15,13),
(1,4)(2,7)(3,5)(6,8), (9,12)(14,15) \},$$ and the set of two generators for $L_2(7)|| L_3(2)$ is formed using suitable products:
$$(1,6,5)(2,3,7)(9,11,10)(12,15,13) \hbox{ and }
(1,4)(2,7)(3,5)(6,8)(9,12)(14,15).$$
As the defining set we take $x=\{1,3,5,10,12,14\}$, and as a regular set $y$ the complement of $x$ in $\Omega$. The other details of computations (to check by the reader) are the same as before. In Table~\ref{t:2}, we list the sets of generators and sets $x$ and $y$ for the remaining three cases.

\begin{table}[ht!]
\begin{center}
\caption{}
\label{t:2}

\small
\begin{tabular}{ |l|l|l| }
\hline
group & generators of $H|| K$ & sets $x$ and $y$ \\ \hline

$L_2(9)||A_6$ & (1,5,3,9,6)(2,7,8,4,10)(11,12,13,14,15), & $x=$\{1,3,13,15\}, \\

& (1,5,3,9,6)(2,7,8,4,10)(11,13,14,15,16) & $y =\Omega\setminus x$\\ \hline
$M_{11}(12)||M_{11}$ & (1,12)(2,10,5,7)(3,8)(4,6,11,9)
& $x =$ \{1,3,7,9,11,14,18\}, \\

&
(13,21,17,19)(16,20,23,18), & $y =\Omega\setminus x$ \\

& (1,3)(2,7)(8,11)(9,10)(14,16)(15,18) & \\

& (19,22)(21,23)& \\ \hline

$L_4(2)||A_8$ & (1,9,5,14,13,2,6)(3,15,4,7,8,12,11) & $x=$\{1,2,20,21\},\\

& (16,17,18,19,20,21,22), & $y=\Omega\setminus$\{1,4,7,8,18,20,23\} \\
& (1,3,2)(4,8,12)(5,11,14)(6,9,15) & \\
& (7,10,13)(21,22,23) & \\\hline

\end{tabular}
\end{center}
\end{table}

We note that in the last entry the set $x$ is not regular in $L_4(2)||A_8$, and therefore, rather than taking the complement of $x$, we find another regular set whose complement is as required. (The regular set found is too large for standard GAP 4.10.1 to compute the stabilizer of the family $x^G$ in $L_4(2)||S_8$; so we have changed a little our approach in this very case).
\end{proof}

\section{Results}

Now, we are ready to prove our main results. Since a simple permutation group $G$ may have fixed points, we will consider the restriction $G'$ of $G$ to those points that are not fixed by $G$.

\begin{Theorem}\label{th:main}
Let $G$ be a simple permutation group, and $G'$ the
restriction of $G$ to the points that are not fixed.
Then $G\in BGR(2)$ if and only if $G'$ is not one of the following:
\begin{enumerate}
\item $G'=A_n^{(r)}$, where $n\geq 3$ $(n\neq 4)$, $r\geq 1$, and
$r< \log_2(n)$;
\item $G'=C_5$ or $G'=PSL(2,8)$.
\end{enumerate}
In cases {\rm (i)} and {\rm (ii)} above, $G\notin BGR$. \end{Theorem}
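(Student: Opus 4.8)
The plan is to reduce to the fixed-point-free restriction $G'$ and then split on the transitivity of $G'$. By the dichotomy reduction opening Section~\ref{s:is}, it suffices to prove for $G'$ that either $G'\in BGR(2)$ or $G'\notin BGR$, and to identify which; the conclusion for $G$ then follows at once, and it transfers the $\notin BGR$ claim as well. Being simple, $G'$ is either transitive or intransitive. If $G'$ is transitive and primitive, Lemma~\ref{l:bez_reg} finishes it: $G'\in BGR(2)$ unless $G'$ is a natural alternating group, $C_5$, or $PSL(2,8)$. The natural $A_n$ equals $A_n^{(1)}$, and $1<\log_2 n$ for every $n\geq 3$ with $n\neq 4$, so it is exactly case~(i) with $r=1$, while $C_5$ and $PSL(2,8)$ are case~(ii). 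If $G'$ is transitive but imprimitive, Proposition~\ref{p:ti} gives $G'\in BGR(2)$ directly, and no such group occurs in the exceptional list, since all listed exceptions are primitive or intransitive.

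Now suppose $G'$ is intransitive. By Proposition~\ref{p:simple} it is a parallel sum $H_1 || \cdots || H_t$ with $t\geq 2$, where each $H_i$ is a faithful transitive representation of one fixed simple group $S$, and all $H_i$ are abstractly isomorphic. The positive direction is driven by Lemma~\ref{l:reg}: whenever $G'$ can be written $H || K$ with $H\in BGR(2)$ possessing a regular set, then $G'\in BGR(2)^{\#}$. Call a summand \emph{good} if it lies in $BGR(2)$ and has a regular set; by Lemma~\ref{l:bez_reg} every primitive summand other than a natural $A_n$, a member of $\mathcal{L}$, or $C_5$ is good, and by Proposition~\ref{p:ti} so is every imprimitive summand. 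Thus, more generally, it suffices to isolate \emph{any} sub-sum lying in $BGR(2)$ with a regular set and then remove the remaining summands by a single application of Lemma~\ref{l:reg}.

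The remaining (\emph{bad}) summands are the natural alternating action (when $S=A_n$), the degree-$5$ action $C_5$, and the members of $\mathcal{L}$. Lemmas~\ref{l:h2}, \ref{l:hh}, and~\ref{l:pary} are precisely the tool for the $\mathcal{L}$-members: two copies of the same $H\in\mathcal{L}$, combined as $H^{(2)}$ (Lemma~\ref{l:h2}) or through a nonpermutation automorphism (Lemma~\ref{l:hh}), form a member of $BGR(2)^{\#}$, and any pair of abstractly isomorphic, permutation-inequivalent bad summands---whose possibilities are exhausted by the five cases $A_5/L_2(5)$, $L_3(2)/L_2(7)$, $A_6/L_2(9)$, the two actions of $M_{11}$, and $A_8/L_4(2)$ of Lemma~\ref{l:pary}---does the same. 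Consequently, as soon as some bad summand lies in $\mathcal{L}$, we can pair it with another bad summand (a second $\mathcal{L}$-summand, or a natural alternating partner) to obtain a good sub-sum and then peel. Since neither a natural $A_n$ nor $C_5$ lies in $\mathcal{L}$, the only configurations escaping this are those in which every summand is the \emph{same} natural alternating action, or every summand is $C_5$.

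These pure multiples are settled directly. For $S=A_n$ with all summands the natural action, $G'=A_n^{(r)}$ (untwisted) unless $n=6$; Lemma~\ref{l:An} gives $A_n^{(r)}\in BGR(2)$ iff $2^r\geq n$ and $A_n^{(r)}\notin BGR$ otherwise---exactly case~(i)---while the twisted $A_6$-sums lie in $BGR(2)$ by Lemma~\ref{l:Ann} and so are correctly non-exceptional. For $S=C_5$, which has a unique faithful transitive action, $G'=C_5^{(r)}$ with $r\geq 2$ is an abelian subgroup of a direct sum of regular groups and hence lies in $BGR(2)$ by \cite{GK1}. (Note that the pure multiples $PSL(2,8)^{(r)}$, $r\geq 2$, are \emph{not} residual: since $PSL(2,8)\in\mathcal{L}$ they are already absorbed by the pairing step via Lemma~\ref{l:h2}.) This exhausts the intransitive case. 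Finally, for the $\notin BGR$ assertion in cases~(i) and~(ii): Lemma~\ref{l:An} yields $A_n^{(r)}\notin BGR$ when $2^r<n$; $C_5\notin BGR$ was recorded in Section~\ref{s:ss}; and $PSL(2,8)$ is set-transitive on $9$ points (\cite{BP}), hence orbit-equivalent to $S_9$ and not the symmetry group of any $k$-valued Boolean function. I expect the main obstacle to be the bookkeeping of this intransitive analysis: one must check that the pairing-and-peeling reduction is genuinely exhaustive---so that the pure untwisted alternating multiples, together with the pure $C_5$-multiple in case~(ii), are the \emph{only} residues---and that the resulting threshold matches $2^r\geq n$ precisely.
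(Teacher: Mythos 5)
Your proposal is correct and takes essentially the same route as the paper's proof: the same trichotomy (primitive groups via Lemma~\ref{l:bez_reg}, transitive imprimitive via Proposition~\ref{p:ti}, intransitive via Proposition~\ref{p:simple}), the same peeling device (Lemma~\ref{l:reg}) combined with Lemmas~\ref{l:An}, \ref{l:Ann}, \ref{l:h2}, \ref{l:hh} and~\ref{l:pary} to absorb the summands from $\mathcal{L}$, and the same residual pure cases $A_n^{(r)}$ (with the twisted $A_6$ sums correctly left non-exceptional) and $C_5^{(r)}$ -- your ``good/bad summand, pair-and-peel'' framing is merely a reorganization of the paper's split into ``no component in $BGR$'' versus ``some component in $BGR(2)$''. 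The only deviation is cosmetic: for $C_5^{(r)}$, $r\geq 2$, you invoke \cite{GK1} (subgroups of direct sums of regular groups), where the paper cites \cite{CK} (cf.\ \cite[Theorem 6.1]{kis1}); both yield the needed membership in $BGR(2)$.
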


\begin{proof}
In the proof, we assume that $G=G'$ has no fixed points. The result extends on groups with fixed points by remarks at beginning of
Section~\ref{s:is}.

First, if $G$ is primitive, then using \cite[Theorem~4.2, Corollary~4.3]{SV} we infer that $G\in BGR(2)$, unless it is $A_n, C_5$ or $L_2(8)$. If $G=A_n$ or $C_5$ then we know it does not belong to $BGR$. If $G=PSL(2,8)$ then it is set-transitive (\cite{BP,SV}), so it is not in $BGR$, either.

If $G$ is transitive imprimitive, then the result is by Proposition~\ref{p:ti}: all groups belong to $BGR(2)^{\#}$.

Thus, we may assume that $G$ is intransitive, and by Proposition~\ref{p:simple} (and the following remarks), $G$ is a parallel sum of at least two transitive components, all abstractly isomorphic to $G$.

If none of the components is in $BGR$, then all they are permutation isomorphic to one of the groups pointed out above. Hence, either $G=H^{(r)}$ or $G=H^{(r)}||_\psi H^{(s)}$, with $\psi$ induced by a nonpermutation automorphism of $H$, where $H = A_n (n\neq 4)$, $C_5$, or $PSL(2,8)$.

In the latter case, since $C_5$ and $PSL(2,8)$ have only permutation automorphisms, $H=A_n$, and the claim follows by Lemma~\ref{l:Ann}. (We note, for future reference, that $G\in BGR(2)^{\#}$, with the exception of $A_6||_\psi A_6$).

In the former case,
if $H=A_n$, then by Lemma~\ref{l:An}, $G\in BGR(2)$ if and only if $2^r \geq n$, and otherwise $G\notin BGR$, at all.
If $H=C_5$, then by \cite{CK} (cf. \cite[Theorem 6.1]{kis1}), $H^{r}\in BGR(2)$ for any $r>1$.
Finally, if $H=PSL(2,8)$, then by Lemma~\ref{l:h2}, $G = H^{(r)} \in BGR(2)^{\#}$ for any $r\geq 2$.

Hence, we may assume now that at least one transitive component $H$ of $G$ belongs to $BGR(2)$.
If $H$ has a regular set $y$, then by Lemma~\ref{l:reg}, $G\in BGR(2)^{\#}$.
If $H$ has no regular set, and $G=H^{(r)}$ is a parallel multiple, then $G\in BGR(2)^{\#}$
by Lemma~\ref{l:h2}. If $G$ is a parallel sum of $r>1$ copies of $H$ different from $G=H^{(r)}$, then the same holds by Lemma~\ref{l:hh}.

It remains to consider the situation when $G$ has two transitive components $H$ and $K$ that \emph{are not} permutation isomorphic, but are abstractly isomorphic, $H\in BGR(2)$, and $H$ has no regular set. By Proposition~\ref{p:ti}, $H$ is primitive and belongs to the list $\mathcal L$.
For the other component $K$ we may assume that either it does not belong to $BGR(2)$
(in which case the only possibility is $K=A_n$ for some $n$), or it belongs to $BGR(2)$ and has no regular set (in which case it appears in the list $\mathcal L$). Looking for the list $\mathcal L$ (preceding Lemma~\ref{l:bez_reg}) we see that there are exactly five possibilities for $H$ and $K$, and combining Lemmas~\ref{l:pary} and~\ref{l:reg} we obtain that, in any case, $G\in BGR(2)^{\#}$.
\end{proof}

In the proof above, there is a lot of information on regular sets. Some extra arguments yield the complete description.

\begin{Theorem}\label{th:reg}
Let $G$ be a simple permutation group, and $G'$ the restriction of $G$ to the points that are not fixed.
Then $G$ has a regular set
if and only if $G$ is not one of the following:
\begin{enumerate}
\item $G'=A_n^{(r)}$, where $n\geq 3$ $(n\neq 4)$, $r\geq 1$, and $r < \log_2(n-1)$;
\item $G'$ is primitive and appears in the list $\mathcal L$ in Lemma~{\ref{l:bez_reg}};
\item $G'=A_6 ||_\psi A_6$ is the exceptional group considered in Lemma~\ref{l:Ann}.
\end{enumerate}
\end{Theorem}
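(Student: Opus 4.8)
The plan is to follow the same trichotomy (primitive, transitive imprimitive, intransitive) used in the proof of Theorem~\ref{th:main}, reading off from the lemmas of Sections~\ref{s:ti}--\ref{s:is} the information they already contain about regular sets, and supplementing it only in the one place where possessing a regular set is strictly weaker than lying in $BGR(2)$. For a primitive simple $G'\neq A_n$ this is immediate: Lemma~\ref{l:bez_reg}(i) says $G'$ has a regular set if and only if $G'\notin\mathcal L$, which is exactly the split between ``not exceptional'' and case~(ii). The primitive natural action $A_n$, which is $A_n^{(1)}$, I fold into case~(i) by establishing the refined threshold below uniformly for all $r\geq 1$.

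The technical heart, and the main obstacle, is the sharpened criterion that $A_n^{(r)}$ has a regular set if and only if $2^r\geq n-1$, which differs from the $BGR$-threshold $2^r\geq n$ of Lemma~\ref{l:An} precisely in the boundary value $2^r=n-1$. I would work on the set $U=\{(i,j):1\leq i\leq r,\ 1\leq j\leq n\}$ on which $A_n^{(r)}$ acts by $(i,j)\mapsto(i,jg)$; thus every element is some $g^{(r)}$ and merely permutes the ``columns'' $j$ according to $g$. Encoding a subset $y\subseteq U$ by its column patterns $c(j)=\{i:(i,j)\in y\}$, the element $g^{(r)}$ fixes $y$ exactly when $g$ preserves the partition of the columns into classes of equal pattern. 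For the forward implication, when $2^r\geq n-1$ I assign $n-2$ columns pairwise distinct patterns and give the two remaining columns one further common pattern; the resulting partition has a single block of size $2$ together with $n-2$ singletons, its setwise stabilizer in $S_n$ is generated by a single transposition, and since a transposition is odd this stabilizer meets $A_n$ trivially, so $y$ is regular in $A_n^{(r)}$. For the converse, when $2^r\leq n-2$ there are at most $n-2$ available patterns, so the column partition has at most $n-2$ blocks and hence $\sum_{B}(|B|-1)\geq 2$; this forces either a block of size $\geq 3$, giving a $3$-cycle, or two blocks of size $\geq 2$, giving a product of two transpositions. In either case some nontrivial \emph{even} permutation preserves the partition, so every $y$ has nontrivial stabilizer and no regular set exists. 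This settles both the intransitive multiples $A_n^{(r)}$ with $r\geq 2$ and, at $r=1$, the primitive natural action, and the inequality $2^r<n-1$ is exactly the condition $r<\log_2(n-1)$ of case~(i).

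Transitive imprimitive groups contribute no exceptions, since by Proposition~\ref{p:ti} each of them has a regular set. For the intransitive case I would invoke Proposition~\ref{p:simple} to write $G'$ as a parallel sum of $t\geq 2$ transitive components, all abstractly isomorphic to a simple group $S$. The basic reduction is the elementary observation that \emph{if a single component $H$ has a regular set $y_0$, then $y_0$ is already regular in $G'$}: any element stabilizing $y_0$ preserves the orbit carrying it and so restricts on that orbit to a stabilizer of $y_0$ in $H$, which is trivial, and in a parallel sum the restriction to one component determines the whole element. In particular, whenever some component has a regular set $G'$ is not exceptional; this disposes at once of $S=C_5$, whose regular action contains the regular set consisting of a single point.

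It remains to treat parallel sums in which \emph{no} component has a regular set. By Proposition~\ref{p:ti} and Lemma~\ref{l:bez_reg} every such component is then primitive and is either a group from the list $\mathcal L$ or a natural action $A_m$. If at least one component is an $\mathcal L$-realization, I produce a two-component seed that already lies in $BGR(2)^{\#}$ --- namely $H^{(2)}$ by Lemma~\ref{l:h2}, or a nonpermutation twist $H||_\psi H$ by Lemma~\ref{l:hh}, or a mixed pair $H||_\phi K$ (including the case where $K$ is a natural $A_m$) by Lemma~\ref{l:pary} --- and then adjoin the remaining components one at a time through Lemma~\ref{l:reg}, which preserves membership in $BGR(2)^{\#}$ and hence the existence of a regular set. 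The only surviving families are therefore the parallel sums of natural copies of a single $A_m$. For $m\neq 6$, where $A_m$ has only permutation automorphisms, these are exactly the multiples $A_m^{(t)}$, for which the criterion above gives no regular set precisely when $2^t<m-1$, i.e.\ case~(i). For $m=6$ the nonpermutation automorphism also yields the groups $A_6^{(r)}||_\psi A_6^{(s)}$, and Lemma~\ref{l:Ann} shows that all of these have regular sets except $A_6||_\psi A_6$, which is case~(iii). Collecting the outcomes of the three action-types reproduces exactly the exceptional families (i)--(iii), completing the proof.
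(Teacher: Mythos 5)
Your proposal is correct and follows essentially the same route as the paper: the primitive/imprimitive/intransitive trichotomy of Theorem~\ref{th:main}, with Lemma~\ref{l:bez_reg} (Seress's list) for primitive groups, Proposition~\ref{p:ti} for imprimitive ones, Proposition~\ref{p:simple} plus Lemmas~\ref{l:reg}, \ref{l:h2}, \ref{l:hh}, \ref{l:pary}, \ref{l:Ann} for parallel sums, and your column-pattern argument for the sharpened threshold $2^r\geq n-1$ is the paper's own argument (partitions of the copies $\Omega_i$ jointly encoding a partition of $\{1,\dots,n\}$ into at most $2^r$ parts, with the one $2$-block-plus-singletons construction and the $3$-cycle/double-transposition obstruction) in equivalent language. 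Your only deviation --- the direct observation that a regular set in a single component of a parallel sum is already regular in the whole sum, which dispenses with $C_5$ and with the $BGR(2)$ hypothesis of Lemma~\ref{l:reg} in that branch --- is a harmless minor simplification, not a different method.
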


\begin{proof} We assume again that $G=G'$ has no fixed points, since it is easy to see that the result extends immediately. The proof follows exactly the proof of the preceding theorem, with one place requiring more consideration.

If $G$ is primitive that the result is by \cite{ser} (cf. \cite[Theorem~2.2]{SV}).
If $G=H^{(r)}||_\psi H^{(s)}$, with $\psi$ induced by a nonpermutation automorphism of $H$, where $H = A_n$, then the claim follows by Lemma~\ref{l:Ann}.

If $G=H^{(r)}$ and $H=A_n$, then we need to show, in addition, that for $n=2^r+1$, $G$ has a regular set, and if $2^r < n-1$, then $G$ has no regular set. Indeed, $G$ acting on a set $\Omega$ has a regular set if and only if there is a partition of $\Omega$ into two sets such that no nontrivial permutation in $G$ preserves this partition. Since $A_n^{(r)}$ acts on $\Omega = \Omega_1 \cup \ldots \cup \Omega_r$, where each $\Omega_i$ is a copy of $\{1,\ldots,n\}$, and the action of $g^{(r)}$ is the same on all copies as the action of $g\in A_n$ on $\{1,\ldots,n\}$, the union of partitions of all $\Omega_i$ into two sets is equivalent to a partition of $\{1,\ldots,n\}$ into $2^r$ sets (or less if some partitions of $\Omega_i$ coincide). Now, if
$n=2^r+1$, then one can choose partitions of the sets $\Omega_i$ equivalent to a partition of $\{1,\ldots,n\}$ consisting of a two element set and $n-2$ singletons. Obviously the only permutation in $A_n$ preserving this partition is the identity. A regular set $y$ in $A_n^{(r)}$ is the union of blocks, one from each partition of $\Omega_i$. If $2^r < n-1$, then any partition of $\{1,\ldots,n\}$ equivalent to partitions of $\Omega_i$ contains either a block with $3$ elements or two 2-element blocks. Then there exists a nontrivial $g\in A_n$ preserving this partition, and therefore, $A_n^{(r)}$ has no regular set.
\end{proof}

Finally, we have also a general result concerning subgroups of simple permutation groups. Note, that $C_5\notin BGR$, and the same concerns its direct sum with $m$ fixed points $C_5\oplus I_m$. Now, $C_5\oplus I_1$ is a subgroup of $PSL(2,5)$, and more generally $C_5\oplus I_{m+1}$ is a subgroup of $PSL(2,5)\oplus I_m$, $m>0$.
This is exceptional with regard to the following.

\begin{Theorem}\label{th:sub}
If a simple permutation group $G$ is a relation group, then every subgroup $H<G$ is a relation group with the exception of
$H=C_5\oplus I_{1} < PSL(2,5)$ or more generally,
$H=C_5\oplus I_{m+1} < PSL(2,5)\oplus I_{m}$ ($m>0$).
\end{Theorem}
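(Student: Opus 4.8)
The plan is to follow the case analysis in the proof of Theorem~\ref{th:main}, upgrading each conclusion ``$G\in BGR(2)$'' to ``$G\in BGR(2)^{\#}$'' wherever possible, since by the very definition of $BGR(2)^{\#}$ the latter already asserts that every subgroup of $G$ belongs to $BGR(2)$. First I would dispose of fixed points. Writing $G=G'\oplus I_m$ with $G'$ fixed-point-free, every subgroup of $G$ has the form $H'\oplus I_m$ with $H'\le G'$, and by the fixed-point reduction at the beginning of Section~\ref{s:is} one has $H'\oplus I_m\in BGR(2)$ if and only if $H'\in BGR(2)$. Thus the statement for $G$ reduces to the fixed-point-free case for $G'$, and the single core exception $C_5\oplus I_1<PSL(2,5)$ propagates to the stated family $C_5\oplus I_{m+1}<PSL(2,5)\oplus I_m$. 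So from here I assume $G=G'$.

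For the non-primitive part the work is essentially already in hand. If $G$ is transitive imprimitive, Proposition~\ref{p:ti} gives $G\in BGR(2)^{\#}$ outright. If $G$ is intransitive, Proposition~\ref{p:simple} presents it as a parallel sum of abstractly isomorphic transitive components, and Lemmas~\ref{l:reg},~\ref{l:An},~\ref{l:h2},~\ref{l:hh} and~\ref{l:pary} place every such sum in $BGR(2)^{\#}$, leaving only the straggler $A_6||_\psi A_6$ of Lemma~\ref{l:Ann}. For this last group I would use that each of its subgroups is again a parallel sum $H||_\psi H$ with $H\le A_6$ acting on the six points, so that these subgroups are covered by the same parallel-sum lemmas (or by a short direct verification); the salient point is that a constituent which is bad on its own, such as the $5$-cycle $C_5\oplus I_1$ inside $A_6$, is doubled into the harmless $C_5^{(2)}\oplus I_2\in BGR(2)$. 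Hence no new exception arises away from the primitive case.

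The heart of the proof, and the home of the exception, is the primitive case. For a primitive simple $G\neq A_n,C_5,PSL(2,8)$ that possesses a regular set (equivalently $G\notin\mathcal L$, by Lemma~\ref{l:bez_reg}), I would exhibit a defining relation $R$ with $G=\G(R)$ together with a regular set $y$ of $G$ satisfying $|y|\notin ar(R)$ (replacing $y$ by its complement when convenient, and using that a primitive group carrying one regular set carries regular sets of several sizes), and then apply Lemma~\ref{l:sies} with $G$ in the role of $H$; since $G$ also has a regular set, this yields $G\in BGR(2)^{\#}$ and settles all subgroups at once. The remaining primitive groups are exactly the members of $\mathcal L$ other than $PSL(2,8)$, which have no regular set, so the $BGR(2)^{\#}$ shortcut is unavailable and their subgroups must be inspected directly, one conjugacy class at a time, in GAP. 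Here the unique failure surfaces: in $PSL(2,5)=A_5$ of degree $6$ an element of order $5$ is forced to act as a single $5$-cycle with one fixed point, whence $C_5<PSL(2,5)$ is $C_5\oplus I_1\notin BGR$, while every other subgroup is a relation group---for instance the Klein four group occurs intransitively with three orbits of size two (a parallel-type sum in $BGR(2)$, not the forbidden regular $K_4\oplus I_2$), and order-$3$ elements give the doubled $C_3^{(2)}$.

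The main obstacle is precisely this primitive analysis, in two forms. First, I must make the regular-set-avoiding-the-arity argument uniform over all primitive $G\notin\mathcal L$, that is, guarantee that some regular set has cardinality outside the arity of a suitable defining relation; this is where abundance of regular sets (drawing on \cite{ser,DMH}) is needed and where an individual group could in principle resist. Second, the finite check over $\mathcal L$ is delicate rather than routine: for each of the thirteen groups one must verify that no cyclic subgroup of prime order and no alternating subgroup lands in a forbidden action, and in particular confirm that the potentially dangerous $V_4$, $C_3$ and $C_5$ subgroups occur only in their harmless multi-cycle forms---$PSL(2,5)$ being exactly the degree small enough to trap an order-$5$ element into the single-cycle-plus-fixed-point pattern that produces the sole exception.
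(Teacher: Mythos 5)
Your proposal reproduces the paper's outer skeleton correctly (fixed-point reduction, Proposition~\ref{p:ti} for the transitive imprimitive case, and the intransitive lemmas that already give $BGR(2)^{\#}$), but at the heart of the theorem --- the primitive case --- it substitutes an announced computation for an argument. For primitive $G\notin\mathcal L$ you say you ``would exhibit'' a defining relation $R$ and a regular set $y$ with $|y|\notin ar(R)$ and then apply Lemma~\ref{l:sies}; this uniform existence over all such groups is exactly the delicate step the paper does \emph{not} re-derive. It instead invokes \cite[Lemma~3.1(ii)]{SV} (every subgroup of a group that has a regular set, is not set-transitive, and is maximal in $S_n$ with this property, is a relation group) together with the computational theorem of \cite{GKh}; the paper's closing Remark even warns that the corresponding argument in \cite[Theorem~4.1]{SV} is wrong for degree at most $32$ and was repaired only in \cite{GKh}. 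Your plan for the groups in $\mathcal L$ has a further logical gap: checking that no subgroup ``lands in a forbidden action'' ($A_n$-type, $C_3$, $C_4$, $C_5$, regular $K_4$, $PSL(2,8)$) only rules out the \emph{known} non-relation groups; to prove the theorem one must show every remaining subgroup \emph{is} a relation group, i.e.\ produce a defining relation for each, which is precisely the per-subgroup computation performed in \cite{GKh} and quoted wholesale by the paper to isolate $C_5\oplus I_1<PSL(2,5)$ as the unique failure.

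Your handling of the straggler $A_6||_\psi A_6$ also does not go through as stated. Its subgroups are diagonal parallel sums $K||\,\psi(K)$ with $K\leq A_6$, and Lemma~\ref{l:reg} requires a constituent that lies in $BGR(2)$ \emph{and} has a regular set: for $K=A_5$ a point stabilizer, the constituents are $A_5\oplus I_1\notin BGR$ and the transitive $L_2(5)$, which is in $BGR(2)$ but has no regular set (it belongs to $\mathcal L$), so the hypothesis fails on both sides and one is thrown back on the bespoke construction of Lemma~\ref{l:pary}(i); moreover, constituents of arbitrary subgroups (diagonal copies of $S_4$, the cyclic subgroups of order $4$ with cycle type $(4,2)$, etc.) are not simple and are not covered by the paper's parallel-sum lemmas at all, so ``a short direct verification'' is doing unacknowledged work. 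The paper sidesteps all of this with one observation: $A_6||_\psi A_6$ is a subgroup of $M_{12}$, all of whose subgroups are relation groups by \cite{GKh}. Finally, you silently pass over $G=C_5^{(r)}$, which the quoted results place in $BGR(2)$ but not in $BGR(2)^{\#}$; the case is harmless --- as the paper notes, $C_5^{(r)}$ has no nontrivial proper subgroups --- but it must be mentioned, since your claim that the intransitive lemmas put ``every such sum'' in $BGR(2)^{\#}$ is false for it.
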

\begin{proof}
Again we assume that $G$ has no fixed points and follow the proof of Theorem~\ref{th:main}. The most involved is now the case of primitive groups. By Lemma~3.1 (ii) \cite{SV}, if $G$ is a subgroup of group $H$ that has a regular set, is not set-transitive, and is maximal in $S_n$ with respect to this property, then every subgroup of $H$ (and thus $G$) is a relation group, as required. By \cite{BP} (cf. \cite[Lemma~2.1]{SV}), the only set-transitive simple primitive group not containing $A_n$ is $PSL(2,8)$, which is not a relation group. So, we need to check those simple groups $G$ that are subgroups of primitive groups that have no regular sets. This requires quite substantial computation, which has been performed in \cite{GKh}. By the theorem proved in \cite{GKh}, the only subgroup of a simple primitive group which is not a relation group is just $H=C_5\oplus I_1 < PSL(2,5)$.

Further, following the proof of Theorem~\ref{th:main}, the only case we need still to consider, when it is not proved that $G\in BGR(2)^{\#}$, are those with $G=(C_5)^{(r)}$ or (in view of Lemma~\ref{l:Ann}) $G=A_6 ||_\psi A_6$ with a nonpermutation automorphism $\psi$. In the former case $G$ has no nontrivial subgroups, so there is nothing to prove. In the latter case, it is enough to observe that $A_6 ||_\psi A_6$ is a subgroup of $M_{12}$, whose all subgroups are relational groups by \cite{GKh}.
\end{proof}

\emph{Remark}. In the first part of the proof, for primitive $G$, we have used ideas of the proof \cite[Theorem~4.1]{SV}. Yet, there is a mistake in the proof of this theorem in \cite{SV}. The argument there works only for $|\Omega|>32$. Nevertheless, the theorem is true as it is shown in \cite{GKh}, where a stronger version of it is proved.



\end{document}